\definecolor{darkblue}{rgb}{0.0, 0.0, 0.55}
\definecolor{bordeaux}{rgb}{0.34, 0.01, 0.1}
\renewcommand{\subset}{\subseteq}
\newcommand{\mycontentsbox}{%
{
\parskip=-1.0pt
\newpage\printindex\tableofcontents}}
\def\enddoc@text{\ifx\@empty\@translators \else\@settranslators\fi
\ifx\@empty\addresses \else\@setaddresses\fi
\newpage\mycontentsbox
}
 \numberwithin{equation}{section}
\newsavebox\myboxA
\newsavebox\myboxB
\newlength\mylenA
\newcommand*\xoverline[2][0.75]{%
    \sbox{\myboxA}{$\m@th#2$}%
    \setbox\myboxB\null
    \ht\myboxB=\ht\myboxA%
    \dp\myboxB=\dp\myboxA%
    \wd\myboxB=#1\wd\myboxA
    \sbox\myboxB{$\m@th\overline{\copy\myboxB}$}
    \setlength\mylenA{\the\wd\myboxA}
    \addtolength\mylenA{-\the\wd\myboxB}%
    \ifdim\wd\myboxB<\wd\myboxA%
       \rlap{\hskip 0.5\mylenA\usebox\myboxB}{\usebox\myboxA}%
    \else
        \hskip -0.5\mylenA\rlap{\usebox\myboxA}{\hskip 0.5\mylenA\usebox\myboxB}%
    \fi}
\newtheorem{theorem}            {Theorem}[section]
\newtheorem{corollary}          [theorem]{Corollary}
\newtheorem{proposition}        [theorem]{Proposition}
\newtheorem{lemma}              [theorem]{Lemma}
\newtheorem{remark}         [theorem]{Remark}
\newtheorem{example}            [theorem]{Example}
\newcommand{\Gat}{\Gamma}
\newcommand{\gat}{\gamma}
\newcommand{\PhiG}{\Gat} 
\newcommand{\gv}{{\tt{g}}}
\newcommand{\hv}{{\tt{h}}}
\newcommand{\rv}{{\tt{r}}}
\def\cD{ {\mathcal D} }
\def\cF{ {\mathscr F} }
\def\cH{ {\mathcal H} }
\def\cK{ {\mathcal K} }
\def\cR{{ \mathcal R }}
\def\cS{{\mathcal S} }
\def\smatn{\mathbb S_n(\mathbb C)}
\def\gtupn{\mathbb S_n(\C)^\gv}
\def\gtupl{\mathbb S_\ell(\C)^\gv}
\def\bbS{{\mathbb S}}
\def\gtup{\bbS(\C)^\gv}
\def\rtup{\bbS(\C)^\rv}
\newcommand{\gtuphr}{\bbS(\C)^\rv}
\def\htup{\bbS(\C)^{\hv}}
\def\htupn{\bbS_n(\C)^{\hv}}
\def\C{\mathbb C}
\def\R{\mathbb R}
\newcommand{\onlyco}{\operatorname{\!-co}}
\newcommand{\opco}{\operatorname{opco}}
\newcommand{\oopco}{\operatorname{-opco}}
\newcommand{\matco}{\operatorname{matco}}
\def\gammamatco{\Gat\onlyco}
\def\gammaopco{\Gat\oopco}
\def\range{\operatorname{range}}
\def\cmatco{\overline{\matco}}
\def\sa{B(\cH)_{sa}^{\gv}}
\mathchardef\mhyphen="2D
\def\cH{\mathcal{H}}
\def\ax{\langle x \rangle}
\newcommand{\intr}{\operatorname{int}}
\newcommand{\KK}{\mathcal{K}}
\newcommand{\oK}{\mathfrak{K}}
\newcommand{\oKp}{\oK_{\rm mat}}
\newcommand{\oD}{\mathfrak{D}}
\newcommand{\PGat}{\Phi_\Gat}
\newcommand{\KKn}{\KK(n)}
\newcommand{\TV}{\operatorname{TV}}
\newcommand{\woD}{\widehat{\oD}}
\newcommand{\wcD}{\widehat{\cD}}
\newcommand{\spos}{\mathcal{P}}
\newcommand{\sopos}{\mathfrak{P}}
\newcommand{\limsot}{\operatorname{SOT-lim}}
\newcommand{\limwot}{\operatorname{WOT-lim}}
\newcommand{\Coup}{\mathcal{C}_\Gamma}
\newcommand{\oCoup}{\mathfrak{C}_\Gamma}
\newcommand{\df}[1]{{\bf{#1}}{\index{#1}}}
\newcommand{\indifferent}{regular\xspace}
\title[$\Gat$-convex sets]{Noncommutative partial convexity via $\Gat$-convexity}
\author[M.T. Jury]{Michael Jury${}^1$}
\address{Michael Jury, Department of Mathematics\\
  University of Florida\\ Gainesville }
\email{mjury@ufl.edu}
\thanks{${}^1$Research Supported by NSF grant DMS-1900364}
\author[I. Klep]{Igor Klep${}^{2}$}
\address{Igor Klep, Department of Mathematics, 
University of Ljubljana, Slovenia}
\email{igor.klep@fmf.uni-lj.si}
\thanks{${}^2$Supported by the 
Slovenian Research Agency grants J1-8132, N1-0057 and P1-0222. Partially supported 
by the 
Marsden Fund Council of the Royal Society of New Zealand.}
\author[M.E. Mancuso]{Mark E. Mancuso${}^3$}
\address{Mark E. Mancuso, Department of Mathematics and Statistics, 
  Washington University in St. Louis
  }
\email{mark.mancuso@wustl.edu}
\thanks{${}^3$Research partially supported by NSF grant DMS-1565243}
\author[S. McCullough]{Scott McCullough${}^4$}
\address{Scott McCullough, Department of Mathematics\\
  University of Florida\\ Gainesville 
   }
   \email{sam@math.ufl.edu}
\thanks{${}^4$Research supported by  NSF grants DMS-361501 and DMS-1764231}
\author[J.E. Pascoe]{James Eldred Pascoe${}^5$}
\address{James Pascoe, Department of Mathematics\\
  University of Florida\\ Gainesville 
   }
   \email{pascoej.ufl.edu}
\thanks{${}^5$Partially supported by NSF MSPRF DMS 1606260.}
\subjclass[2010]{46N10, 47L07, 52A30}
\keywords{partial convexity, biconvexity, bilinear matrix inequalities, noncommutative matrix polynomial, 
matrix convexity, free semialgebraic set, linear pencil,
$\Gat$-convexity,  Effros-Winkler  theorem}
\numberwithin{equation}{section}
\begin{document}

\begin{abstract}
Motivated by classical notions of
partial convexity, biconvexity, and bilinear matrix inequalities,
we investigate the theory of free sets that are defined by (low degree) noncommutative
 matrix polynomials with constrained terms. Given a tuple of symmetric polynomials $\Gat$,
a free set $\cK$ is called $\Gat$-convex if
for all
 $X\in\cK$ and isometries $V$ satisfying
$V^*\Gat(X)V=\Gat(V^*XV)$, we have $V^*XV\in \cK.$
We establish an Effros-Winkler Hahn-Banach separation theorem for $\Gat$-convex sets; 
they are delineated by  linear pencils in the coordinates of $\Gat$ and the variables $x$.
\end{abstract}

\maketitle

\thispagestyle{empty}

\section{Introduction}
Convexity is ubiquitous in quantitative sciences. A set 
$C\subset \mathbb R^\gv$ is convex if for any two points in $C$ the line segment connecting 
them lies entirely in $C$. Such sets, whenever they are closed, are described by (possibly infinitely many) affine
linear inequalities. Convexity is fundamental in many areas of mathematics, including functional 
analysis, optimization, and geometry \cite{Bar}. The convex sets described 
by finitely many linear inequalities 
are precisely the  polytopes, a very restrictive class. A much bigger, but 
still very tractable class of convex sets $C$,
which are the central objects in semidefinite programming \cite{BPT},
 are described by linear matrix inequalities (LMIs), i.e.,
\[
C=\{x\in\R^\gv\mid A_0+\sum_j A_jx_j\succeq0\},
\]
where $A_j$ are self-adjoint $d\times d$ matrices and 
$T\succeq0$ means that the self-adjoint matrix $T$ is positive semidefinite. Such sets are called 
spectrahedra. They appear in several branches of mathematics, 
e.g. optimization and algebraic geometry \cite{BPT}. 

The \df{linear pencil} $L(x)=A_0+\sum_j A_jx_j$ is naturally evaluated at tuples of self-adjoint $n\times n$ matrices $X$ by
\[
L(X)=A_0\otimes I_n + \sum_j A_j\otimes X_j
\]
leading to the notion of a \df{free spectrahedron} 
\[
\cD_L=  (\cD_L(n))_n
\quad\text{where }\ \cD_L(n)=\{X\in \smatn^\gv \mid L(X)\succeq0\},
\]
and $\smatn$ denotes the set of all $n\times n$ self-adjoint matrices. \index{$\smatn$}
Free spectrahedra arise naturally in applications such as systems engineering  and control theory \cite{boyd}.
They are matrix convex sets \cite{EW,CE,FHL,HKM13,HKMjems,HL,DDSS,zalar} 
 and are dual to operator systems and thus
intimately connected to the theory of completely positive maps \cite{paulsen}.
Moreover, the  Effros-Winkler  Hahn-Banach
 separation theorem \cite{EW}
 says that matrix convex sets  are determined
 by LMIs in much the same way that convex sets are determined by linear inequalities.

Sets and functions
 that have some partial convexity or other geometric features,
 say convex in some coordinates with the other held fixed, 
 arise in applications.  
 Free noncommutative polynomials,
 and more generally free rational functions, arise in
  engineering systems problems governed by a signal flow diagram.
  Typically, there are two classes
 of variables. The system variables depend on the choice of system parameters
 and produce polynomial, or more generally rational,  inequalities
 (in the sense of positive semidefinite) in the state variables. The algebraic
 form of these inequalities involves (matrix-valued) free polynomials
 or rational functions and depends only upon the flow diagram,
 and not the particular choice of system variables.  Convexity in the state
 variables, for a given choice of system variables, 
 is an important optimization consideration. One way to study partial convexity is through 
 bilinear matrix inequalities (BMIs) \cite{KSVS04}\footnote{See also  the MATLAB toolbox, {\url{https://set.kuleuven.be/optec/
 Software/bmisolver-a-matlab-package-for-solving-optimization-problems-with-bmi-constraints.}}}.
 A BMI is an expression of the form
\begin{equation}
\label{e:BMI}
A_0 + \sum A_j x_j + \sum_k B_k y_k + \sum_{p,q} C_{pq} x_p y_q \succeq0
\end{equation}
for self-adjoint matrices $A_j,B_k,C_{p,q}$ \cite{VAB}. 
Domains defined by BMIs are convex in the $x$ and $y$ variables separately.
The article \cite{HHLM} contains some  noncommutative results on partial convexity.

In analogy with matrix convexity and BMIs, 
it makes sense to consider matrix polynomial inequalities
built from a restricted set of predetermined polynomials
giving rise to the notion of $\Gat$-convexity.
One type of inequality we consider is of the form
\[
I-Ax-By-C y^2\succeq0.
\]
Sets describable in this form are ``convex in $x$'' and unconstrained in $y$.
That is, by allowing extra nonlinear terms in the 
matrix inequality, we can isolate certain geometric features of the domain. Allowing $xy,yx$
 as in \eqref{e:BMI},  obtains a class of biconvex sets.

Often times in this setting, the results, while finite-dimensional in nature, require working with operator inputs 
or coefficients for the inequalities, fitting in with larger trends in 
matrix convexity  \cite{DK,EE18,EE+,PS,PSS}  and the emerging area of free 
analysis  \cite{AM, BMV, KVV, PV, Pop, SSS}.

\subsection{Free polynomials and their evaluations}
\label{sec:basicfree}
Let $x=(x_1,\dots,x_\gv)$ denote a $\gv$-tuple of freely noncommuting variables.
Let $\ax$  denote the semigroup of  words in $x$ and we often use \index{word}
 $1$ to denote the unit $\varnothing.$ Let $\C\ax$ the algebra of finite $\C$-linear
combinations of words in $x$. \index{$\ax$} \index{$\C\ax$} 
An element $p\in \C\ax$ is a \df{free polynomial}, or just polynomial for short, and
takes the form
\begin{equation}
\label{e:defp}
 p(x) =\sum_{w\in\ax}  p_w w,
\end{equation}
where the sum is finite and $p_w\in \C$. \index{involution ${}^*$}
There is a natural involution ${}^*$ on $\ax$ determined by $x_j^*=x_j$ for $1\le j\le \gv$
and $(wu)^* = u^* w^*$ for $u,w\in\ax$.   This involution naturally extends to $\C\ax$.
For instance, for the polynomial $p$ of equation \eqref{e:defp}, 
\[
 p^* = \sum \overline{p_w} w^*.
\]
Since $x_j^*=x_j$ the variables $x$ are referred to as \df{symmetric variables}.

Let $\gtup$ denote the sequence, or graded set, \index{$\gtup$} \index{$\gtupn$}
$(\gtupn)_{n=1}^{\infty}$, where $\gtupn$ is the set of $\gv$-tuples $(X_1,\dots,X_\gv)$
of selfadjoint elements of $M_n(\C)$. 
 Elements of $\C\ax$ are naturally  \df{evaluated} at an $X\in\gtup$. For a word 
\[
 w = x_{j_1}\, x_{j_2} \, \cdots \, x_{j_N} \in \ax,
\]
and $X\in \gtupn$, 
\[
 w(X) = X_{j_1}\, X_{j_2}\, \cdots\, X_{j_N} \in M_n(\C).
\]
Given $p$ as in equation \eqref{e:defp}, \index{$p(X)$}
\[
 p(X) =\sum_{w\in\ax} p_w w(X) \in M_n(\C).  
\]
Thus $p$ determines a (graded)  function $p:\gtup \to M(\C)$,
where $M(\C)$ is the (graded) set $M(\C)=(M_n(\C))_n$.  Likewise
a tuple $p=(p_1,\dots,p_\rv)\in \C\ax^{1\times \rv} = M_{1,\rv}(\C\ax)$ determines
a mapping $p:\gtup\to M(\C)^\rv$.  \index{$M(\C)$}  \index{mapping}

A polynomial is \df{symmetric} if
it is invariant under the involution. As is well-known, $p\in \C\ax$
 is symmetric if and only if  $p(X)^* = p(X)$ for all $X\in\gtup$.
 In this case $p$ determines a mapping $p:\gtup \to \mathbb S(\C)^1$.

  Certain matrix-valued free polynomials will play an important role
 in this article. 
 A $\mu\times\mu$ matrix-valued free polynomial $p\in M_\mu(\C\ax)$  takes the form 
 of equation \eqref{e:defp}, but now  the coefficients $p_w$ lie in $M_\mu(\C).$
  Such a polynomial
$p$ is \df{evaluated} at a tuple $X\in \gtupn$ using the tensor (Kronecker) product as 
\[
 p(X) = \sum p_w\otimes w(X) \in M_\mu(\C)\otimes M_n(\C),
\]
 and $p$ is \df{symmetric} if  $p(X)^* = p(X)$ for all $X\in \gtup$. Equivalently,
$p$ is symmetric if $p_{w^*}=p_w^*$ for all words $w$.

\subsection{$\Gat$-convex sets}
\label{sec:gat-convex-sets}
 Let $\Gat=(\gat_1,\dots,\gat_\rv)$ denote a tuple  of {\it symmetric} free polynomials
 with $\gat_j=x_j$ for $1\le j\le \gv\le \rv$. We also use  $\PhiG:\gtup \to \rtup$
 to  denote the
 resulting mapping, \index{$\PGat$} \index{$\Gat$}
\begin{equation*}
 \PhiG(X) = (\gat_1(X),\dots,\gat_\rv(X)).
\end{equation*}
 A pair $(X,V)$, where $X\in \gtupn$
and $V:\C^m\to \C^n$ is an isometry, is a \df{$\Gat$-pair} provided 
\[
 V^* \PhiG(X)V= \PhiG(V^*XV).
\]
 Let $\Coup$ denote the collection of $\PhiG$-pairs. \index{$\Coup$}
 For instance, if $U$ is an $n\times n$ unitary matrix
 and  $X\in \gtupn$, then $(X,U)$ is a $\Gat$-pair.

A subset $\KK\subset \gtup$ is a sequence  $(\KKn)_n$ where
$\KKn\subset \gtupn$ for each $n$. A subset $\KK\subset \gtup$ is a
\df{free set} if it is closed with respect to direct sums,
 simultaneous  unitary similarity, and restrictions to reducing 
subspaces.\footnote{
Explicitly, if $X\in \KK(n)$ and $Y\in \KK(m)$, then $X\oplus Y\in\KK(n+m);$
if $U$ is a $n\times n$ unitary, then $U^*XU=(U^*X_1 U,\dots,U^*X_\gv U)\in \KK(n)$;
and if $\cK \subset \C^n$ is $k$ dimensional  reducing subspace for $X$, 
then  $X|_{\cK}\in \KK(k).$}
A set $\KK$ is a \df{$\Gat$-convex set} if it is free and if 
\[
 X\in \KK \, \text{ and } \, (X,V)\in \Coup \, \implies V^*XV\in \KK.
\]
In the special case that $\rv=\gv$ (equivalently $\Gat(x)=x$) 
$\Gat$-convexity reduces to ordinary 
 matrix convexity.

\begin{example}\rm
 \label{eg:TV}  
  Consider the case of two variables $(x,y)=(x_1,x_2)$
 and $\Gat=\{x,y,y^2\}$.  For notational ease, we write
 $y^2$-convex instead of $\{x,y,y^2\}$-convex. 
 A pair $((X,Y),V)$ is in $\Coup$ if and only if
 the range of $V$ reduces $Y$ and, as shown in Proposition 
 \ref{prop:y2convex=convexinx}, a free set $\KK$ is $y^2$-convex
 if and only if $(X_1,Y),(X_2,Y)\in \KKn$ implies $(\frac{X_1+X_2}{2},Y)\in\KKn$.
 Using either of these criteria, it is readily verified that, for $d$
 a positive integer, the TV screen  $\TV^d=(\TV^d(n))_n$ defined by
\begin{equation}
\label{e:TV}
 \TV^d(n) = \{(X,Y): I-X^2-Y^{2d} \succeq 0\} \subset {\mathbb S_n(\C)^2}
\end{equation}
 is a free set that is $y^2$-convex. 
 Section \ref{sec:y2convex} treats 
 $y^2$-convexity. In these directions, see  also  \cite{HHLM,BM,DHM}.
\qed
\end{example}

\begin{example}\rm
\label{eg:xyconvex}
 Let
 $\Gat = \{x,y,xy+yx, i(xy-yx))\}.$  It is straightforward to 
 verify that $((X,Y),V)$ is a $\Gat$-pair if and only if 
 $V^* XY V = V^* X V\, V^* YV$. Thus it is sensible, for notational purposes, to write
  $xy$ in place of the more cumbersome $(xy+yx, i(xy-yx))$ and
  $xy$-convex in place of $\{x,y,xy+yx, i(xy-yx)\}$-convex.   The convexity in this example
  is intimately connected with Bilinear Matrix Inequalities (BMIs)\footnote{See for instance
 the MATLAB toolbox, {\url{https://set.kuleuven.be/optec/
 Software/bmisolver-a-matlab-package-for-solving-optimization-problems-with-bmi-constraints.}}}
  as explained in Subsection \ref{sec:HBandP},
which were previously studied in \cite{KSVS04}; see also \cite{HHLM} for some  noncommutative results.\index{Bilinear Matrix Inequalities}\index{BMI}
\qed
\end{example}

 Of course a theory of convexity should contain Hahn-Banach separation results.
 In the case of matrix convex sets, this role is played by monic linear pencils and the
 \index{Effros-Winkler Theorem}
  Effros-Winkler Matricial Hahn-Banach separation theorem \cite{EW},
 or just the \df{Effros-Winkler theorem} for short.  A \df{monic linear pencil}
 $M\in M_\mu(\C\ax)$ is a symmetric matrix-valued polynomial of the form
\[
 M(x) = I_\mu +\sum_1^\gv A_j x_j,
\]
 where $A\in \bbS_\mu(\C)^\gv.$  We refer to $\mu$ as the \df{size of the pencil} $M.$
 The following version of the Effros-Winkler theorem can be found in \cite{HMannals,HKMjems}.
 
\begin{theorem}
 \label{t:EW}
  If $\KK\subset \gtup$ is a closed matrix convex set containing the origin and if 
  $Y\notin\KK$, then there is a monic linear pencil $M$ such that $M(\KK)\succeq 0,$
  but $M(Y)\not\succeq 0$. Furthermore, if $Y$ has size $\ell$, then $M$ can be chosen 
  to have size $\ell.$
\end{theorem}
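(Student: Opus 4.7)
My plan is to reduce the matricial separation of $Y$ from $\KK$ to a finite-dimensional Hahn--Banach separation at level $\ell$, and then to upgrade the resulting scalar separator to a genuine monic pencil using the matrix convexity of $\KK$ at all levels.

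First I set up: a monic linear pencil of size $\ell$ has the form $L_A(x) = I_\ell + \sum_{j=1}^\gv A_j x_j$ for some $A = (A_1,\dots,A_\gv) \in \bbS_\ell(\C)^\gv$. The set
$$
\cF := \{A \in \bbS_\ell(\C)^\gv : L_A(X) \succeq 0 \text{ for all } X \in \KK\}
$$
is a closed convex subset of the finite-dimensional real vector space $\bbS_\ell(\C)^\gv$ containing the origin (since $0 \in \KK$). The task becomes the contrapositive: assuming $L_A(Y) \succeq 0$ for every $A \in \cF$, conclude $Y \in \KK(\ell)$.

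Next I would use classical Hahn--Banach to detect $Y \notin \KK(\ell)$ at the scalar level. Since $\KK(\ell)$ is closed convex in $\bbS_\ell(\C)^\gv$ and $Y \notin \KK(\ell)$, finite-dimensional Hahn--Banach gives a real-linear functional $\lambda(X) = \sum_j \tr(B_j X_j)$, for some $B \in \bbS_\ell(\C)^\gv$, and a constant $c \in \R$ with $\lambda \leq c$ on $\KK(\ell)$ and $\lambda(Y) > c$; because $0 \in \KK(\ell)$, one has $c \geq 0$. This $B$ separates at level $\ell$ only as a scalar functional, and is not yet a pencil valid matricially on all of $\KK$.

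The crux is to upgrade $B$ to an honest element of $\cF$ whose failure on $Y$ is witnessed by a unit vector $w \in \C^{\ell^2}$. I would encode the constraint $A \in \cF$ as the family of affine inequalities
$$
1 + \sum_j \langle v,\, (A_j \otimes X_j)\, v\rangle \geq 0, \qquad X \in \KK(n),\ v \in \C^{\ell n},\ \|v\| = 1,
$$
and similarly record the $(Y,w)$-constraints for unit $w \in \C^{\ell^2}$. A bipolar/Farkas argument in the real affine space $\bbS_\ell(\C)^\gv \oplus \R$ then expresses each $(Y,w)$-functional as a limit of nonnegative combinations of $(X^{(i)}, v_i)$-functionals. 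Packaging the weights $\lambda_i$ into the columns $\sqrt{\lambda_i}\,v_i$ of an isometry $V$ and combining the $X^{(i)}$ via direct sum realises $Y$ as a compression $Y = V^*\bigl(\bigoplus_i X^{(i)}\bigr) V$; closure of $\KK$ under direct sums and compression to reducing subspaces then forces $Y \in \KK(\ell)$, contradicting the standing assumption.

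The principal obstacle is this last step: translating approximate dual-cone membership into an exact isometric compression representation requires a careful compactness/limit argument, since the family of $(X,v)$-constraints is infinite across $n$ while the pencil sought lives in a fixed finite-dimensional space. Fixing the output dimension to $\ell$ is what keeps the separating space finite-dimensional, enabling Carath\'eodory-type bounds and ultimately delivering $\operatorname{size}(M) = \ell$ as asserted.
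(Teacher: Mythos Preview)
The paper does not prove Theorem~\ref{t:EW}: it is stated as a known result with a pointer to \cite{HMannals,HKMjems}, and is then used as a black box in the proofs of Theorems~\ref{thm:jp} and~\ref{thm:opEW}. There is therefore no in-paper argument to compare your attempt against.

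On your sketch itself: the step you flag as the principal obstacle is a genuine gap, not a routine detail. Writing a single $(Y,w)$-functional as a nonnegative combination $\sum_i\lambda_i$ of $(X^{(i)},v_i)$-functionals and unwinding the tensor identification yields $W^*Y_jW=\sum_i\lambda_i V_i^{\,*}X_j^{(i)}V_i$ for each $j$, where $W,V_i$ are the matrices whose columns are read off from $w,v_i$. Matching constant terms gives only the scalar identity $\tr(W^*W)=\sum_i\lambda_i\tr(V_i^*V_i)$, not the matrix identity $\sum_i\lambda_i V_i^*V_i=W^*W$ that would let you stack the $\sqrt{\lambda_i}\,V_i$ into an isometry. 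Even choosing $w$ to be the maximally entangled vector (so that $W$ is a scalar multiple of $I_\ell$ and the left side is a multiple of $Y_j$) does not force the required matrix identity on the right; and running the argument for each unit $w$ separately produces a family of such relations with no evident way to glue them into a single compression $Y=V^*(\bigoplus X^{(i)})V$. The proofs in the cited references bypass this by recasting the hypothesis ``$L_A(Y)\succeq 0$ for all $A$ in the polar'' as complete positivity of the unital linear map $1\mapsto I_\ell$, $x_j\mapsto Y_j$ on the operator system spanned by $1,x_1,\dots,x_\gv$ with matrix order determined by $\KK$; Arveson extension and Stinespring dilation then deliver the isometric compression in one stroke. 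Your Farkas-then-package route does not obviously close without importing a comparable structural ingredient.
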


  For $\Gat$-convex sets, the analog of a monic linear pencil
  is a \df{monic $\Gat$-pencil} -- a symmetric
 $L\in M_\mu(\C\ax)$ of the form
\[
 L(x) = I_\mu +  \sum_{j=1}^\rv A_j \gat_j(x),
\]
 where $A\in \bbS_\mu(\C)^\rv.$  
 In Section \ref{sec:generalstuff}
 analogs of the Effros-Winkler theorem for $\Gat$-convex sets are
 established. For instance, 
 a basic separation result that follows from Theorem \ref{thm:jp} is the following.

\begin{theorem}
 \label{t:thm:jp-intro}
   Suppose $\KK\subset \gtup$ is closed, $\Gat$-convex, contains $0,$ and $\Gamma(0)=0.$
  If the matrix
 convex hull of $\Gat(\KK) \subset \gtuphr$ is closed and if 
 $Y\in \mathbb S_\ell(\C)^\gv \setminus \KK(\ell)$, then there is a monic
 $\Gat$-pencil $L$ of size $\ell$ such that $L$ is positive semidefinite on $\KK$, but
 $L(Y)$ is not positive semidefinite.
\end{theorem}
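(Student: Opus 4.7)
The plan is to apply the Effros--Winkler theorem (Theorem \ref{t:EW}) to the matrix convex hull $C := \matco(\Gat(\KK)) \subset \gtuphr$. By hypothesis $C$ is closed and matrix convex, and it contains $0$ because $\Gat(0)=0 \in \Gat(\KK)$. Once I show that $\Gat(Y) \notin C$, Effros--Winkler produces a monic linear pencil $M(z) = I_\ell + \sum_{j=1}^{\rv} A_j z_j$ in $\rv$ variables with $M \succeq 0$ on $C$ but $M(\Gat(Y))\not\succeq 0$; the size of $M$ can be taken equal to the matrix size of $\Gat(Y)$, which is $\ell$. Then $L(x) := M(\Gat(x)) = I_\ell + \sum_j A_j \gat_j(x)$ is, by construction, a monic $\Gat$-pencil of size $\ell$; it is PSD on $\KK$ because $\Gat(\KK) \subset C$, and $L(Y) = M(\Gat(Y))$ is not PSD. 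So the entire theorem hinges on the separation $\Gat(Y) \notin C$.

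To verify $\Gat(Y)\notin C$ I plan to use $\Gat$-convexity via a direct-sum/dilation argument. Suppose, for a contradiction, that $\Gat(Y)\in C$. Then there exist $X^{(1)}, \dots, X^{(k)} \in \KK$ and matrices $V_1,\dots,V_k$ with $\sum_i V_i^* V_i = I_\ell$ such that $\Gat(Y) = \sum_i V_i^* \Gat(X^{(i)}) V_i$. Set $X = X^{(1)} \oplus \cdots \oplus X^{(k)}$, which lies in $\KK$ because $\KK$ is a free set closed under direct sums, and let $V$ be the column isometry obtained by stacking the $V_i$, so that $V^*XV = \sum_i V_i^* X^{(i)} V_i$ and $V^*\Gat(X)V = \sum_i V_i^* \Gat(X^{(i)}) V_i = \Gat(Y)$. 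Since the first $\gv$ coordinates of $\Gat$ are the variables $x$, reading off those coordinates in the last identity gives $V^*XV = Y$. Hence $V^*\Gat(X)V = \Gat(Y) = \Gat(V^*XV)$, so $(X,V)\in \Coup$, and $\Gat$-convexity of $\KK$ forces $Y = V^*XV \in \KK$, contradicting $Y\in \mathbb S_\ell(\C)^\gv\setminus\KK(\ell)$.

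The step I expect to be the main obstacle is exactly this matrix-hull-to-$\Gat$-pair step, and the role of the hypothesis that $\matco(\Gat(\KK))$ is \emph{already} closed is to trivialise it: only finite matricial convex combinations need to be considered, so the $\Gat$-pair built above is exact. Without that hypothesis one would only have a sequence of finite matricial convex combinations whose $\Gat$-images converge to $\Gat(Y)$; the associated pairs $(X_n, V_n)$ would be merely \emph{approximate} $\Gat$-pairs (the first $\gv$ coordinates $V_n^* X_n V_n$ tend to $Y$, but the higher coordinates $V_n^*\gat_j(X_n)V_n$ need not equal $\gat_j(V_n^*X_nV_n)$ before passing to the limit), and promoting such approximate pairs to exact ones would require a further dilation or compactness argument. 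The closedness assumption is presumably built in precisely to sidestep this issue and keep the separation argument a one-line pull-back of Effros--Winkler.
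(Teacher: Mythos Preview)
Your proof is correct and follows essentially the same route as the paper: apply the Effros--Winkler theorem to the closed matrix convex set $\matco(\Gat(\KK))$, after first checking that $\Gat(Y)\notin\matco(\Gat(\KK))$ via the $\Gat$-pair construction. The paper packages the latter step as a separate proposition (Proposition~\ref{prop:jp}, using Proposition~\ref{prop:gammahull} to write each element of $\matco(\Gat(\KK))$ as a \emph{single} compression $V^*\Gat(X)V$ with $X\in\KK$), whereas you start from the multi-term matricial convex combination and reduce to a single compression by direct-summing; these are the same argument. Your commentary on the role of the closedness hypothesis is accurate as well.
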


 Operator convexity is defined at the outset of Section \ref{sec:opset}. In
 the operator setting, the closedness hypothesis on the (operator) convex hull of
 $\Gamma(\KK)$ is not needed. It is closed automatically by 
 Theorem \ref{thm: sotclosed}.

\begin{theorem}[Theorem~\ref{thm:GatOPEW}]
 \label{t:GatOPEW-intro}
  Suppose $\oK$ is a bounded, strong operator topology closed, $\Gat$-convex set
  that contains $0$ and $\Gamma(0)=0.$ If $Y\notin\oK$, then there exists
  a positive integer $N$ and a monic $\Gat$-pencil $L$ of size $N$ such that $L$ takes
  positive semidefinite values on $\oK$, but $L(Y)$ is not positive semidefinite.
\end{theorem}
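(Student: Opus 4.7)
The plan is to transfer the problem to the space $\gtuphr$ of $\Gat$-variables, apply the ordinary operator Effros-Winkler theorem there, and pull a separating monic linear pencil back to a monic $\Gat$-pencil. Specifically, form the operator convex hull $\opco(\Gat(\oK)) \subset \gtuphr$. Because each $\gat_j$ is a polynomial and $\oK$ is bounded, $\Gat(\oK)$ and hence $\opco(\Gat(\oK))$ are bounded; because $\Gat(0)=0$ and $0 \in \oK$, we have $0 \in \opco(\Gat(\oK))$; and by Theorem~\ref{thm: sotclosed}, $\opco(\Gat(\oK))$ is automatically SOT-closed. Thus it is a bounded, SOT-closed, operator convex set containing the origin, to which the operator version of the Effros-Winkler theorem applies.

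The key step is showing $\Gat(Y) \notin \opco(\Gat(\oK))$, so that this separation theorem gives a usable pencil. Suppose for contradiction that $\Gat(Y) = V^* \Gat(Z) V$ for some $Z \in \oK$ (acting on a suitable Hilbert space) and some isometry $V$. Since $\gat_j = x_j$ for $1 \le j \le \gv$, comparing the first $\gv$ coordinates forces $Y = V^*ZV$. The remaining coordinates then give $V^* \gat_j(Z) V = \gat_j(Y) = \gat_j(V^*ZV)$ for $\gv < j \le \rv$, which combined with the trivial case $j \le \gv$ yields $V^*\Gat(Z)V = \Gat(V^*ZV)$, i.e.\ $(Z,V)\in\Coup$. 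By $\Gat$-convexity of $\oK$ this forces $Y = V^*ZV \in \oK$, contradicting $Y \notin \oK$.

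With $\Gat(Y) \notin \opco(\Gat(\oK))$ in hand, the operator Effros-Winkler theorem supplies a monic linear pencil $\widetilde L(z) = I_N + \sum_{j=1}^\rv A_j z_j$ of some size $N$ in the $\rv$ variables $z=(z_1,\dots,z_\rv)$ with $\widetilde L \succeq 0$ on $\opco(\Gat(\oK))$ while $\widetilde L(\Gat(Y)) \not\succeq 0$. Setting $L(x) := \widetilde L(\Gat(x)) = I_N + \sum_{j=1}^\rv A_j \gat_j(x)$ produces a monic $\Gat$-pencil of size $N$. For $X \in \oK$, $\Gat(X) \in \opco(\Gat(\oK))$ gives $L(X) = \widetilde L(\Gat(X)) \succeq 0$, while $L(Y) = \widetilde L(\Gat(Y)) \not\succeq 0$, which is the desired separation.

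The main obstacle lies in justifying the implication $\Gat(Y) \in \opco(\Gat(\oK)) \Rightarrow Y \in \oK$ when $\Gat(Y)$ is reached only as an SOT-limit of isometric compressions rather than as a single compression. One handles this by approximating $\Gat(Y) = \limsot V_\alpha^* \Gat(Z_\alpha) V_\alpha$, reading off $V_\alpha^* Z_\alpha V_\alpha \to Y$ from the first $\gv$ coordinates (where $\Gat$ is literally the identity), and then passing to the limit using the SOT-closedness of $\oK$ after putting oneself into the $\Gat$-pair regime. It is precisely here that the operator setting is more forgiving than the matrix one: Theorem~\ref{thm: sotclosed} packages the necessary continuity/closure statements and lets us dispense with the auxiliary closedness hypothesis required in Theorem~\ref{t:thm:jp-intro}.
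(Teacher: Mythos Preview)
Your first three paragraphs are correct and follow exactly the paper's approach; the paper simply cites Proposition~\ref{prop:opco} (the identity $\PhiG^{-1}(\opco(\PhiG(\oK)))=\gammaopco(\oK)=\oK$) for the step you carry out by hand in your second paragraph.

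Your fourth paragraph, however, rests on a misconception and should be deleted. By the operator analogue of Proposition~\ref{prop:gammahull} (recorded as the first line of Proposition~\ref{prop:opco}, applied with the trivial $\Gat$ in $\rv$ variables), every element of $\opco(\Gat(\oK))$ is already a \emph{single} isometric compression $V^*\Gat(Z)V$ with $Z\in\oK$; no SOT-limits enter the description of the hull. So the ``obstacle'' you name does not exist, and your second paragraph is already a complete proof that $\Gat(Y)\notin\opco(\Gat(\oK))$. The role of Theorem~\ref{thm: sotclosed} is solely to guarantee that $\opco(\Gat(\oK))$ is SOT-closed so that the operator Effros--Winkler Theorem~\ref{thm:opEW} applies to it; it plays no part in the membership argument. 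Moreover, your proposed workaround is not salvageable as written: from $V_\alpha^*\Gat(Z_\alpha)V_\alpha\to\Gat(Y)$ you do get $V_\alpha^*Z_\alpha V_\alpha\to Y$, but the pairs $(Z_\alpha,V_\alpha)$ need not be $\Gat$-pairs, so $V_\alpha^*Z_\alpha V_\alpha$ need not lie in $\oK$, and the vague phrase ``putting oneself into the $\Gat$-pair regime'' does not repair this.
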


By direct summing all $\Gat$-pencils 
with rational coefficients
that are positive semidefinite on $\oK$, one obtains a single operator $\Gat$-pencil $L$ with 
bounded coefficients whenever $0$ is in the
interior of the convex hull of $\Gamma(\oK).$
That is, by this routine argument we obtain the following corollary.

\begin{corollary}
\label{t:OneOpPencil-intro}
Suppose $\oK$ is strong operator topology closed, $\Gat$-convex, bounded, 
contains $0$, and $\Gamma(0)=0.$ Suppose also that $0$ is in the
interior of the convex hull of $\Gamma(\oK).$
Then there exists a monic operator $\Gat$-pencil $L$ such that 
$\oK=\{X: L(X)\succeq 0\}.$
\end{corollary}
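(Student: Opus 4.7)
The plan is to apply Theorem~\ref{t:GatOPEW-intro} for every $Y\notin\oK$ and assemble the resulting finite-dimensional separating pencils into a single operator pencil by direct sum, using the interior hypothesis to keep the coefficients bounded.

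First, I would build a countable separating family. For each $Y\notin\oK$, Theorem~\ref{t:GatOPEW-intro} yields a monic $\Gat$-pencil $L_Y$ of finite size $N_Y$ which is positive semidefinite on $\oK$ but fails $L_Y(Y)\succeq 0$. Since the conditions ``PSD on $\oK$'' and ``not PSD at $Y$'' are respectively closed and open in the entries of the coefficient matrices, $L_Y$ may be perturbed so that its coefficients lie in $M_{N_Y}(\mathbb{Q}[i])$ while retaining both properties. The collection of all monic $\Gat$-pencils with coefficients in $\bigcup_\mu M_\mu(\mathbb{Q}[i])$ that are PSD on $\oK$ is countable; enumerate it as $\{L^{(k)}\}_{k\ge 1}$ with $L^{(k)}(x)=I_{\mu_k}+\sum_{j=1}^{\rv} A^{(k)}_j\gat_j(x)$. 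By the above, this family already separates $\oK$ from every $Y\notin\oK$.

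Next, I would form the direct sum $L:=\bigoplus_k L^{(k)}=I_{\mathcal H}+\sum_{j=1}^{\rv}\widehat A_j\,\gat_j(x)$ on $\mathcal H=\bigoplus_k \C^{\mu_k}$, where $\widehat A_j=\bigoplus_k A^{(k)}_j$. For $L$ to be a bona fide monic operator $\Gat$-pencil, each $\widehat A_j$ must be bounded, which is where the interior hypothesis enters. The point is that a monic $\Gat$-pencil is PSD on $\oK$ iff the ordinary monic pencil $M(z):=I+\sum_j A_j z_j$ is PSD on the (matrix) convex hull of $\PhiG(\oK)\subset\gtuphr$, since the free spectrahedron $\{z:M(z)\succeq 0\}$ is matrix convex. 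If $0$ lies in the interior of this hull, there exists $\varepsilon>0$ such that for each $j$ and each sign $\sigma\in\{\pm1\}$, the scalar tuple $\sigma\varepsilon e_j$ lies in the hull; then $I\pm\varepsilon A^{(k)}_j\succeq 0$, giving $\|A^{(k)}_j\|\le 1/\varepsilon$ uniformly in $k$, so $\widehat A_j$ is a bounded selfadjoint operator.

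With $L$ a legitimate monic operator $\Gat$-pencil, $L\succeq 0$ on $\oK$ by construction, while for any $Y\notin\oK$ some block $L^{(k)}(Y)$ fails to be PSD and sits as a diagonal summand of $L(Y)$, witnessing $L(Y)\not\succeq 0$. Hence $\oK=\{X:L(X)\succeq 0\}$. The main obstacle is the uniform norm bound on the $A^{(k)}_j$; everything else is routine direct-sum bookkeeping and a standard rational approximation argument. The delicate step is justifying that PSD-ness on $\oK$ propagates to the convex hull of $\PhiG(\oK)$ and that an interior point of that hull gives a balanced scalar neighborhood — both should follow directly from the fact that $L^{(k)}$ only sees $X$ through $\PhiG(X)$, together with matrix convexity of free spectrahedra.
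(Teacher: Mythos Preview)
Your approach is exactly the paper's: the text preceding the corollary says to direct-sum all monic $\Gat$-pencils with rational coefficients that are PSD on $\oK$, with the coefficient bound coming from the interior hypothesis (this is Lemma~\ref{l:zeroininterior} applied to $\opco(\Gat(\oK))$, which is your $\pm\varepsilon e_j$ argument verbatim). One quibble: in your rational approximation step you note that ``PSD on $\oK$'' is a \emph{closed} condition in the coefficients, but closedness alone does not license perturbation while remaining inside; the usual fix is to first replace $L_Y$ by $tI+(1-t)L_Y$ for small $t>0$, giving $L_Y\succeq tI$ uniformly on the bounded set $\oK$ and making the condition open, after which rational perturbation is legitimate.
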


The hypotheses of Corollary \ref{t:OneOpPencil-intro}
are met whenever the span of $\Gamma$ does not contain a positive polynomial, which is the content of
Theorem \ref{t:pospexists}.
 For example, this holds whenever 
the coordinates of $\Gamma$ are multilinear.

 \subsection{Reader's guide}
 Section \ref{sec:generalstuff} develops the framework
 of $\Gat$-convexity. Subsection \ref{ssec:co}
 discusses $\Gat$-convex hulls and proves an analog
 of the Effros-Winkler theorem for $\Gat$-convex sets in 
 Theorem \ref{thm:jp}.
 Free semialgebraic sets and $\Gat$-convex polynomials
 are introduced and treated in Subsection \ref{ssec:bicvxPoly}.
 Section \ref{sec:opset} 
 deals with a more robust class of $\Gat$-convex sets.
Here, by adding extra points, further structure is obtained.
The article concludes with an investigation of
$y^2$-convexity in Section \ref{sec:y2convex}.

The authors thank Bill Helton for his insights and  helpful conversations.

\section{General Theory of $\Gat$-Convex Sets} 
\label{sec:generalstuff}

We now introduce the basic notions related to $\Gat$-convexity. The main result of this section
is Theorem \ref{thm:jp}, giving an Effros-Winkler type separation result for $\Gat$-convex sets. 
In Subsection \ref{ssec:bicvxPoly},
we touch upon $\Gat$-convex polynomials, 
a topic we explore further in the accompanying paper
\cite{JKMMP+}.

\subsection{Convex hulls}\label{ssec:co}
 The \df{$\Gat$-convex hull} of a free set $\KK$,
 the intersection of all $\Gat$-convex sets containing $\KK$,
 is denoted $\gammamatco(\KK).$ \index{$\gammamatco(\KK)$} 
 When  $\rv=\gv$ (and thus $\Gat(x)=x$) $\gammamatco(\KK)$
 is the ordinary matrix convex hull of $\KK$, denoted $\matco(\KK)$.
 \index{$\matco(\KK)$}

\begin{proposition}
\label{prop:gammahull}
 If $\KK \subset \gtup$ is a free set, then 
\[
 \gammamatco(\KK) =\{V^*XV: X\in \KK, \, (X,V)\in\Coup \}.
\] 
\end{proposition}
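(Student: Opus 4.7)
The plan is to denote the right-hand side by $\cS$ and prove the two inclusions separately. The containment $\cS \subseteq \gammamatco(\KK)$ is essentially tautological: any $\Gat$-convex set $\cC$ containing $\KK$ must, by the very definition of $\Gat$-convexity, contain $V^*XV$ for every $X \in \KK$ and every isometry $V$ with $(X,V)\in\Coup$. Intersecting over all such $\cC$ yields $\cS \subseteq \gammamatco(\KK)$.

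For the reverse containment, I would show that $\cS$ is itself a $\Gat$-convex free set containing $\KK$; then, since $\gammamatco(\KK)$ is the intersection of all such sets, the inclusion $\gammamatco(\KK) \subseteq \cS$ is automatic. Containment $\KK \subset \cS$ comes from the trivial $\Gat$-pair $(X, I_n)$. The three free-set closure axioms reduce to the fact that direct sums, unitary similarities, and restrictions to reducing subspaces all commute with polynomial evaluation: for direct sums take $X = X_1\oplus X_2 \in \KK$ (using freeness of $\KK$) and $V = V_1\oplus V_2$, and use $\Gat(X_1\oplus X_2) = \Gat(X_1)\oplus\Gat(X_2)$; for unitary similarity by $U$, observe $U^*(V^*XV)U = (VU)^*X(VU)$ and check that $(X, VU)\in\Coup$ directly; for a reducing subspace of $Y = V^*XV$ with inclusion $\iota$, the same subspace reduces each $\gat_j(Y)$, so $\iota^*\Gat(Y)\iota = \Gat(\iota^*Y\iota)$, which yields $(X, V\iota)\in\Coup$ and exhibits $Y|_{\mathrm{range}(\iota)}$ as an element of $\cS$.

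The heart of the matter is verifying $\Gat$-convexity of $\cS$, which boils down to a transitivity (composition) statement for $\Gat$-pairs: if $(X,V)\in\Coup$ and $(V^*XV, W)\in\Coup$, then $(X, VW)\in\Coup$. This follows from the chain of equalities
\[
(VW)^*\Gat(X)(VW) \;=\; W^*\Gat(V^*XV)W \;=\; \Gat(W^*V^*XVW),
\]
applying each hypothesis in turn, together with the observation that the composition of two isometries is an isometry. I do not expect any serious obstacle here---the argument is clean bookkeeping---and the only point requiring minor care is tracking the $\Gat$-pair condition through the various operations on isometries.
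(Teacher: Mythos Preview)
Your proposal is correct and follows essentially the same approach as the paper: show that the right-hand side is a $\Gat$-convex free set containing $\KK$ (hence contains $\gammamatco(\KK)$), and observe that any $\Gat$-convex set containing $\KK$ must absorb compressions by $\Gat$-pairs (hence contains the right-hand side). The paper compresses all of your verification into the phrase ``it is readily verified''; you have supplied exactly the bookkeeping (closure under the free-set axioms and the transitivity of $\Gat$-pairs) that the paper omits.
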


\begin{proof} 
 Let $\mathscr{K} =\{V^*XV: X\in \KK, \, (X,V)\in\Coup\}.$ It is readily verified that
$\mathscr{K}$ is a $\Gat$-convex set that contains $\KK$. On the other hand, 
 by definition, 
$\gammamatco(\KK)$ must contain $\mathscr{K}$. 
\end{proof}

\begin{proposition}
\label{prop:jp}
Suppose $\KK\subset \gtup$ is a free set and $X\in \gtup$.  The point 
$X$ is in $\gammamatco(\KK)$ if and only if 
$\PhiG(X)$ is in $\matco(\PhiG(\KK)).$ Equivalently,
\[
 \PhiG^{-1}(\matco(\PhiG(\KK))) 
   = \gammamatco(\KK).
\]
\end{proposition}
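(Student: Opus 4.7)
The plan is to prove the two inclusions by chaining Proposition~\ref{prop:gammahull} with the standard description of the ordinary matrix convex hull of a free set: if $\cS \subset \gtuphr$ is a free set, then
\[
\matco(\cS) = \{V^*WV : W \in \cS,\ V \text{ an isometry}\}.
\]
This identity follows because any matrix-convex combination $\sum_i V_i^* W_i V_i$ with $W_i \in \cS$ and $\sum_i V_i^* V_i = I$ can be rewritten as $V^*WV$ with $W = \bigoplus_i W_i \in \cS$ (using closure under direct sums) and $V$ the column isometry built from the $V_i$. So both $\gammamatco(\KK)$ and $\matco(\PhiG(\KK))$ admit clean one-isometry descriptions, and the task is to transfer between them along $\PhiG$.

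For the forward inclusion $\gammamatco(\KK) \subset \PhiG^{-1}(\matco(\PhiG(\KK)))$, take $X \in \gammamatco(\KK)$. By Proposition~\ref{prop:gammahull}, $X = V^*YV$ for some $Y \in \KK$ and some $(Y,V) \in \Coup$. The $\Gat$-pair condition then gives $\PhiG(X) = \PhiG(V^*YV) = V^*\PhiG(Y)V$, which displays $\PhiG(X)$ as an isometric compression of $\PhiG(Y) \in \PhiG(\KK)$, hence $\PhiG(X) \in \matco(\PhiG(\KK))$. For the reverse inclusion, I first need that $\PhiG(\KK) \subset \gtuphr$ is itself a free set, so that the one-isometry description of $\matco$ applies. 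Closure under direct sums and unitary conjugation is immediate since $\PhiG$ is polynomial. Closure under restrictions to reducing subspaces uses precisely that $\gat_j = x_j$ for $1 \le j \le \gv$: any subspace reducing $\PhiG(Y)$ in particular reduces each $Y_j$, so it reduces $Y$, and then $\PhiG$ of the restriction $Y|_{\cH'} \in \KK$ is the restriction of $\PhiG(Y)$.

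Now assume $\PhiG(X) \in \matco(\PhiG(\KK))$. By the standard description, $\PhiG(X) = V^*\PhiG(Y)V$ for some $Y \in \KK$ and some isometry $V$. Here is the key step, and what I expect to be the only non-bookkeeping observation in the argument: because the first $\gv$ coordinates of $\Gat$ are literally the variables $x_1,\ldots,x_\gv$, comparing the first $\gv$ coordinates of the equation $\PhiG(X) = V^*\PhiG(Y)V$ yields $X = V^*YV$. Substituting back, the same equation reads $\PhiG(V^*YV) = V^*\PhiG(Y)V$, which is exactly the definition of $(Y,V) \in \Coup$. Proposition~\ref{prop:gammahull} then delivers $X \in \gammamatco(\KK)$, completing the proof. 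The whole argument is short once the correct one-isometry descriptions are in hand; the conceptual content is that the hypothesis $\gat_j = x_j$ for $1 \le j \le \gv$ is precisely what couples the compression of $\PhiG(Y)$ to a compression of $Y$ satisfying the $\Gat$-pair intertwining.
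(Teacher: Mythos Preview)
Your proof is correct and follows essentially the same approach as the paper's: both directions use Proposition~\ref{prop:gammahull}, and the reverse direction hinges on the observation that the first $\gv$ coordinates of $\PhiG(X) = V^*\PhiG(Y)V$ read $X = V^*YV$, which then forces $(Y,V)\in\Coup$. You are in fact more careful than the paper in one respect: you explicitly verify that $\PhiG(\KK)$ is a free set so that the one-isometry description of $\matco$ applies, whereas the paper simply asserts the existence of $Y\in\KK$ and an isometry $V$ with $\PhiG(X)=V^*\PhiG(Y)V$ without comment.
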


\begin{proof}
First suppose $X\in \gammamatco(\KK)$. By Proposition \ref{prop:gammahull},  there 
exists a $Y$ in $\KK$ and an isometry $V$ such that 
$V^*\Gat(Y)V= \Gat(V^*YV)$ and $X=V^*YV$. Thus,  $\PhiG(X)=V^*\PhiG(Y)V,$ and therefore
$\PhiG(X) \in \matco(\PhiG(\KK)).$

Conversely, suppose $\PhiG(X)\in \matco(\PhiG(\KK))$. There is a $Y\in \KK$  and 
an isometry $V$ such that $\PhiG(X)  = V^*\PhiG(Y)V.$ 
Comparing the first $\gv$  coordinates gives $X=V^*YV$ and  hence $(Y,V)$ is $\Gat$-pair.
 Since $Y\in \KK$ and $(Y,V)$ is a $\Gat$-pair,  
 Proposition \ref{prop:gammahull} implies  $X\in \gammamatco(\KK)$.
\end{proof}

\begin{proposition}
 The projection of $\matco(\PhiG(\KK))$ onto the first $\gv$ coordinates is $\matco(\KK)$.
\end{proposition}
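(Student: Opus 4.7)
The plan is to exploit two basic facts. First, since $\gat_j(x)=x_j$ for $1\le j\le \gv$, the projection of $\PhiG(Y)$ onto its first $\gv$ coordinates is simply $Y$. Second, because $\KK$ is a free set (closed under direct sums and unitary similarity), so is $\PhiG(\KK)$, using that $\PhiG(Y_1\oplus Y_2)=\PhiG(Y_1)\oplus \PhiG(Y_2)$ and that $\PhiG$ intertwines unitary conjugation. Consequently, both matrix convex hulls admit the standard compression description
\[
  \matco(\KK) = \{V^*YV : Y\in \KK,\ V\ \text{an isometry}\},
  \qquad
  \matco(\PhiG(\KK)) = \{V^*\PhiG(Y)V : Y\in \KK,\ V\ \text{an isometry}\},
\]
since one does not need to take direct sums before compressing when the original set is already closed under them.

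For the inclusion $\supseteq$, I would take $X\in \matco(\KK)$ and write $X=V^*YV$ with $Y\in \KK$ and $V$ an isometry. Then $W := V^*\PhiG(Y)V$ lies in $\matco(\PhiG(\KK))$, and by the first $\gv$ coordinates of $\PhiG(Y)$ being $Y$, the first $\gv$ coordinates of $W$ are $V^*YV = X$. Hence $X$ is in the projection.

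For the inclusion $\subseteq$, given $Z=(Z_1,\dots,Z_\rv)\in \matco(\PhiG(\KK))$, write $Z=V^*\PhiG(Y)V$ for some $Y\in \KK$ and isometry $V$. Comparing the first $\gv$ coordinates yields $(Z_1,\dots,Z_\gv)=V^*YV$, which lies in $\matco(\KK)$ by the compression description.

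There is essentially no obstacle here: the proposition is a formal consequence of the definition of $\PhiG$ (in particular, that its first $\gv$ coordinates are the identity) together with the standard isometric-compression description of matrix convex hulls of sets closed under direct sums. The only mild care required is checking the direct-sum closure of $\PhiG(\KK)$, which is immediate from $\PhiG(Y_1\oplus Y_2)=\PhiG(Y_1)\oplus \PhiG(Y_2)$.
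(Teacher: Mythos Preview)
Your argument is correct. Both you and the paper rely ultimately on the compression description of the matrix convex hull of a free set (Proposition~\ref{prop:gammahull} in the case $\Gat(x)=x$), together with the observation that the first $\gv$ coordinates of $\PhiG$ are the identity.

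The only difference is in the $\supseteq$ direction. You argue concretely: given $X=V^*YV\in\matco(\KK)$, the element $V^*\PhiG(Y)V\in\matco(\PhiG(\KK))$ projects to $X$. The paper instead argues abstractly: the projection of a matrix convex set is matrix convex, and since it visibly contains $\KK$ (because $\PhiG(\KK)$ projects onto $\KK$), it must contain $\matco(\KK)$. Your version is more explicit and self-contained; the paper's is a one-liner that leans on the general closure of matrix convexity under coordinate projection. For the $\subseteq$ direction both arguments are effectively identical. Your remark that $\PhiG(\KK)$ is itself free (needed to invoke the compression description on that side) is a detail the paper leaves implicit.
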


\begin{proof}
 The set $\matco(\PhiG(\KK))$ is matrix convex. Hence its projection onto the first 
$\gv$ coordinates is matrix convex and contains $\KK.$ Therefore this projection contains 
$\matco(\KK)$. On the other hand, by definition, this projection must be contained in $\matco(\KK)$.
\end{proof}

\subsection{Hahn-Banach separation and pencils}
\label{sec:HBandP}
 \index{$M_\mu(\C\ax)$} \index{$p(X)$}

As a special case of a matrix-valued free polynomial,
 a \df{$\Gat$-pencil (of size $d$)} 
is  a  (affine) linear pencil $L$ in $(\gat_1,\dots,\gat_\rv)$ whose
coefficients lie in $\bbS_d(\C).$ Thus, there is a 
positive integer $d$ and $A_0,A_1,\dots,A_\rv\in \bbS_d(\C)$ such that 
\begin{equation}
\label{e:Gat-pencil}
 L(x) = A_0 +\sum A_j \gamma_j(x).
\end{equation}
Since $A_j\in \bbS_d(\C)$ and $\gamma_j$ are symmetric polynomials, $L$
 is symmetric, and hence at times we refer to $L$ as a symmetric $\Gat$-pencil.
The pencil $L$ is \df{monic} if $A_0=I_d.$
For example, in the case of two variables $(x,y)$, 
a symmetric monic $xy$-pencil can be expressed as
\[
 L(x,y) = I + A_x x + A_y y + Bxy + B^*yx,
\]
where $A_x,A_y$ are self-adjoint. A symmetric monic $y^2$-pencil is of the form
\[
L(x,y) = I +A_x x + A_y y + By^2,
\] where $A_x, A_y$, and $B$ are self-adjoint. In the special case that $\rv=\gv$ (equivalently
 $\Gat(x)=x)$,
$L(x)= I+\sum A_j x_j$ is known as a \df{monic linear pencil}.

A pencil $L$ with coefficients in $\mathbb S_d(\C)$
 as in equation \eqref{e:Gat-pencil} is {\bf evaluated}  at a
tuple $X\in \gtupn$ using the tensor  product as
\[
 L(X) = A_0\otimes I_n + \sum A_j\otimes \gamma_j(X) \in 
  \bbS_d(\C)\otimes \bbS_n(\C).
\]
 The \df{free semialgebraic sets} associated to a symmetric $\Gat$-pencil $L$, 
\[
 \wcD_L :=\{X \in\gtup: L(X)\succeq 0\} \ \ (\text{resp.} \, \spos_L :=\{X\in\gtup: L(X)\succ 0\}),
\]
are $\Gat$-convex.  A difficult question is to determine when a closed (resp. open)
$\Gat$-convex set is the positivity (resp. strict positivity) set of
a $\Gat$-linear pencil.

\index{$\cmatco(S)$}
Given a subset $\cS\subset \gtup$, its (levelwise) closed matrix convex hull is 
  denoted by $\cmatco(\cS).$
Thus  $\cmatco(\cS)(n)$ is the closure of $\matco(\cS)(n)$ in $\gtupn$.
A routine argument shows $\cmatco(\cS)$ is also matrix convex.

\begin{theorem}
\label{thm:jp}
  Suppose $\KK$ is $\Gat$-convex, contains $0$, and $\PhiG(0)=0$. 
 If $Y\in \gtupl$ and $\PhiG(Y)\notin \cmatco(\PhiG(\KK))(\ell),$  then there exists a 
 monic $\Gat$-pencil $L$ of size $\ell$ such that $L(\KK)\succ 0$, but $L(Y)\not\succeq 0$. 
 In particular, if $\matco(\Gat(\KK))$ is closed, then for each $Y\notin \KK(\ell)$ there exists
 a monic $\Gat$-pencil $L$ of size $\ell$ such that
 $L(\KK)\succ 0$, but $L(Y)\not\succeq 0$.
\end{theorem}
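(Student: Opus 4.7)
The plan is to reduce the statement to the classical Effros-Winkler theorem (Theorem \ref{t:EW}) transported through the map $\PhiG$. First I would set $\mathcal{S} := \cmatco(\PhiG(\KK)) \subset \gtuphr$, which is closed and matrix convex by the discussion preceding the theorem. Because $0 \in \KK$ and $\PhiG(0)=0$, the set $\mathcal{S}$ contains $0$, so Theorem \ref{t:EW} applies to $\mathcal{S}$ at the point $\PhiG(Y) \in \gtuphr$ of size $\ell$. That yields a monic linear pencil $M(z) = I_\ell + \sum_{j=1}^{\rv} A_j z_j$ of size $\ell$, with $A_j \in \bbS_\ell(\C)$, such that $M(\mathcal{S})\succeq 0$ while $M(\PhiG(Y))\not\succeq 0$.

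Next, I would pull $M$ back via $\PhiG$ to produce the desired pencil in the $x$-variables. Define
\[
L(x) := M(\PhiG(x)) = I_\ell + \sum_{j=1}^{\rv} A_j\, \gat_j(x).
\]
By construction $L$ is a monic $\Gat$-pencil of size $\ell$. For $X\in\KK$, $\PhiG(X)\in \PhiG(\KK)\subset \mathcal{S}$, so $L(X)=M(\PhiG(X))\succeq 0$; meanwhile $L(Y)=M(\PhiG(Y))\not\succeq 0$. (The stated strict positivity $L(\KK)\succ 0$ is extracted by a small perturbation: replace $M$ by $tM + (1-t)I$ for $t$ slightly less than $1$, which preserves both monicity and the separation of $\PhiG(Y)$ provided $M(\PhiG(Y))$ has a strictly negative eigendirection, as it does by non-semidefiniteness.)

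For the "in particular" clause, I would invoke Proposition \ref{prop:jp}: since $\KK$ is already $\Gat$-convex, $\gammamatco(\KK)=\KK$, and therefore $Y\in\KK$ if and only if $\PhiG(Y)\in \matco(\PhiG(\KK))$. Under the additional hypothesis that $\matco(\PhiG(\KK))$ is closed (hence equals $\cmatco(\PhiG(\KK))$), the condition $Y\notin\KK(\ell)$ is equivalent to $\PhiG(Y)\notin \cmatco(\PhiG(\KK))(\ell)$, so the first half of the theorem applies directly.

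The main conceptual step is really just the observation that $\Gat$-convex sets are precisely the $\PhiG$-preimages of matrix convex sets (Proposition \ref{prop:jp}); once that correspondence is in hand, the Effros-Winkler theorem can be applied in the $z$-coordinates and pulled back. The only genuine technical subtlety is the closedness hypothesis on $\matco(\PhiG(\KK))$ that is needed in the ``in particular'' statement: without it, the separation is available in $\gtuphr$ only against points outside the \emph{closed} matrix convex hull, which is why the main statement is phrased in terms of $\cmatco(\PhiG(\KK))$ rather than $\KK$ directly.
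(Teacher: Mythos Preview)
Your proof is correct and follows essentially the same approach as the paper: apply the Effros--Winkler theorem (Theorem~\ref{t:EW}) to the closed matrix convex set $\cmatco(\PhiG(\KK))$ to obtain a monic linear pencil $M$, pull back to the $\Gat$-pencil $L=M\circ\PhiG$, and then perturb via a convex combination with the identity to upgrade $L(\KK)\succeq 0$ to $L(\KK)\succ 0$ while preserving $L(Y)\not\succeq 0$. The ``in particular'' clause is handled identically, via Proposition~\ref{prop:jp} and the closedness hypothesis.
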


\begin{proof}
 Since $\PhiG(Y)\notin\cmatco(\PhiG(\KK))(\ell)$ and $\cmatco(\PhiG(\KK))$ is 
 a closed matrix convex subset of $\rtup$ containing $0,$
 Theorem \ref{t:EW} 
implies there is a monic linear pencil 
\[
M(z) = I_\ell +\sum_{j=1}^\rv A_jz_j
\]
 of size $\ell$  such that $M(Z)\succeq 0$ for all 
$Z\in \cmatco(\PhiG(\KK))$, but
 $M(\PhiG(Y))\not\succeq 0$. Thus  $L^\prime = M\circ \PhiG$ is  a monic $\Gat$-pencil
of size $\ell$ that 
is indefinite at $Y$ and positive semidefinite on $\KK.$ 
 Replacing $L^\prime$ by 
\[
  L(z)=tI+(1-t)L^\prime(z)  =  I_\ell + \sum tA_j z_j
\]
 for small enough
 $t\in (0,1)$ produces a monic $\Gat$-pencil of size $\ell$ indefinite at $Y$ such that 
$L(\KK)\succ 0.$

To complete the proof, suppose $\matco(\PhiG(\KK))$ is closed and 
 $Y\notin \KK =\gammamatco(\KK)$.
 Since, by Proposition \ref{prop:jp}, $\PhiG(Y)\notin \matco(\PhiG(\KK))=\cmatco(\PhiG(\KK))$
the existence of $L$ follows from what has already been proved.
\end{proof}

\begin{remark}\rm
In Theorem \ref{thm:jp}, $\matco(\PhiG(\KK))$ can be replaced by any matrix convex 
set $\cR$ containing $\matco(\PhiG(\KK))$ such that
\[
 \gammamatco(\KK) =\PhiG^{-1}(\cR\cap \range(\PhiG)).
\]
This ambiguity complicates the problem of determining when a $\Gat$-convex
set is the positivity set of a monic $\Gat$-pencil. 
\qed \end{remark}

\begin{theorem}
 \label{t:pospexists}
   Suppose $\KK$ is $\Gat$-convex, contains $0$ and  $\Gat(0)=0.$ 
   The real span of $\{\gamma_j:1\le j\le \rv\}$  
   contains a polynomial $q\in\C\ax$ such that $q(X)\succeq 0$ for $X\in \KK$ if and only if
   $0$ is not in the interior of $\matco(\Gat(\KK))(1)\subset \mathbb R^\rv$.
\end{theorem}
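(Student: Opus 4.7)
The plan is to identify real tuples $t=(t_1,\dots,t_\rv)\in\mathbb R^\rv$ with polynomials $q:=\sum_j t_j\gamma_j$ in the real span of $\{\gamma_j\}$, and to reduce the biconditional to the classical supporting hyperplane characterization at the origin of the convex set $C(1):=\matco(\Gat(\KK))(1)\subset\mathbb R^\rv$. Note $0\in C(1)$ because $0\in\KK$ and $\Gat(0)=0$. The key bridge connecting matrix positivity of $q$ on $\KK$ to a scalar linear inequality for the functional $\ell(z):=\sum_j t_j z_j$ is the scalarization identity
\[
v^* q(X) v \;=\; \sum_{j=1}^\rv t_j\,v^*\gamma_j(X)v \;=\; \ell(v^*\Gat(X)v),
\]
valid for $X\in\KK(n)$ and unit vectors $v\in\mathbb C^n$. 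Since $C:=\matco(\Gat(\KK))$ is matrix convex, each scalarized compression $v^*\Gat(X)v$ lies in $C(1)$; conversely $C(1)$ is precisely the convex hull of all such compressions taken over all $X\in\KK$ and all unit $v$.

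For the direction $(\Rightarrow)$, I would assume a nonzero $q=\sum t_j\gamma_j$ satisfies $q(X)\succeq 0$ throughout $\KK$. The identity above gives $\ell\ge 0$ at every scalarized compression, hence $\ell\ge 0$ on $C(1)$. Linear independence of $\gat_1=x_1,\dots,\gat_\gv=x_\gv$ in $\C\ax$ makes $q\neq 0$ force $t\neq 0$, so $\ell$ is a nontrivial linear functional vanishing at $0\in C(1)$; this puts $0$ on the boundary of $C(1)$ and rules out $0\in\intr(C(1))$. For the direction $(\Leftarrow)$, I would invoke the supporting hyperplane theorem at the non-interior point $0$ of the convex set $C(1)$, producing $t\neq 0$ with $\ell\ge 0$ on $C(1)$. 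Setting $q:=\sum t_j\gamma_j$, for any $X\in\KK(n)$ and unit $v\in\mathbb C^n$ matrix convexity places $v^*\Gat(X)v$ in $C(1)$, so $v^*q(X)v=\ell(v^*\Gat(X)v)\ge 0$; varying $v$ over the unit sphere then delivers $q(X)\succeq 0$.

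The hard part will be ensuring the polynomial $q$ produced in the reverse direction is actually nonzero. The assignment $t\mapsto\sum t_j\gamma_j$ is injective precisely when $\gat_1,\dots,\gat_\rv$ are linearly independent in $\C\ax$, in which case any $t\neq 0$ from the supporting hyperplane automatically yields $q\neq 0$. Potential linear dependencies among $\gat_{\gv+1},\dots,\gat_\rv$ are handled by refining the choice: one picks $t$ in the annihilator of $\Span C(1)$ lying outside the kernel of $t\mapsto\sum t_j\gamma_j$, which is possible since $0\notin\intr(C(1))$ forces the former annihilator to be nontrivial.
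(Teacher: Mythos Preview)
Your core argument is correct and essentially identical to the paper's: both directions hinge on the scalarization identity $v^*q(X)v=\ell(v^*\Gat(X)v)$ together with the classical supporting-hyperplane theorem applied to the convex set $C(1)=\matco(\Gat(\KK))(1)$, exactly as the paper does.

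Your last paragraph, however, contains a slip. The claim that ``$0\notin\intr(C(1))$ forces the annihilator of $\Span C(1)$ to be nontrivial'' is false in general (take $C(1)$ a closed half-space through $0$). In the presence of a genuine linear dependence among the $\gamma_j$ the annihilator \emph{is} nontrivial, but for a different reason: the kernel $K$ of $t\mapsto\sum t_j\gamma_j$ always embeds in $(\Span C(1))^\perp$ via the same scalarization identity. Your proposed refinement then needs $(\Span C(1))^\perp\supsetneq K$, and this can fail; indeed with $\gv=1$, $\gamma_1=\gamma_2=x$, and $\KK$ the unit ball, one has $K=(\Span C(1))^\perp$, $0\notin\intr(C(1))$, yet no nonzero $q$ in the span is positive semidefinite on $\KK$. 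So the biconditional, read with ``nonzero $q$'', actually fails without linear independence of the $\gamma_j$. The paper's own proof glosses over exactly this point (it uses ``$\lambda(0)=0$ and $0$ in the interior'' to produce $\lambda(y)<0$, tacitly assuming $\lambda\ne0$), so the cleanest resolution is simply to assume the $\gamma_j$ are linearly independent over $\R$, in which case $t\ne0\Rightarrow q\ne0$ and your argument---like the paper's---goes through without further comment.
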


\begin{proof}
 Suppose $0$ is not in the interior of $\matco(\Gat(\KK))(1)$. In this 
 case $0$ is in the boundary of $\matco(\Gat(\KK))(1)$, since $\Gat(0)=0.$
 Hence, as $\matco(\Gat(\KK))(1)$ is convex, there exists a linear functional
 $\lambda:\R^\rv \to \R$ such that $\lambda$ is nonnegative on 
 $\matco(\Gat(\KK))(1).$ Thus $\lambda(z) = \sum_{j=1}^\rv \lambda_j z_j$ for
 some $\lambda_j\in\mathbb R.$ 
  Set $q=\sum_{j=1}^\rv \lambda_j \gat_j.$
  
 Suppose $n$ is a positive integer, $Y\in \gtupn$ and
 $h\in\C^n$ is a unit vector. Identify $h$ as an isometry $h:\C\to\C^n$,
 let $y= h^* \Gat(Y) h$ and observe,
\begin{equation}
\label{eq:qY}
\begin{split}
 \lambda(y) = & \sum_{j=1}^\rv  \lambda_j y_j = \sum_{j=1}^\rv \lambda_j  h^* \gat_j(Y) h  
\\   = &  h^* \, \left [ \sum_{j=1}^\rv \lambda_j \gat_j(Y) \right] \, h 
  = h^* q(Y) h.
\end{split}
\end{equation}

If $Y\in\KK(n)$ and $h\in\C^n$ is any unit vector
 then, by Proposition \ref{prop:gammahull}, $y\in\matco(\Gat(\KK))(1).$
Thus,
\[
 0\le \lambda(y) = h^* q(Y) h
\]
and it follows that $q(Y)\succeq 0$. Hence $q(Y)\succeq 0$ on $\KK.$

To prove the converse, suppose $0$ is in the interior of
 $\matco(\Gat(\KK))(1)$ and $q$ is in the real span of
 $\{\gat_1,\dots,\gat_\rv\}.$ Thus there is a $\lambda\in \R^\rv$
 such that $q=\sum_{j=1}^\rv \lambda_j \gat_j.$  View
 $\lambda:\R^\rv\to \R$ as the linear map $\lambda(z)=\sum_{j=1}^\rv \lambda_j z_j$.
 Since $\lambda(0)=0$ and $0$ is in the interior of $\matco(\Gat(\KK)),$
 there exists $y\in\matco(\Gat(\KK))(1)$ such that $\lambda(y)<0.$ Since $y\in\matco(\Gat(\KK))(1),$
 by Proposition \ref{prop:gammahull}
 there exists an $n$, a vector $h\in\C^n$ and $Y\in \KK(n)$ such that 
 $y= h^* \Gat(Y)h$. Thus, by equation \eqref{eq:qY},
\[
 0 > \lambda(y)  = h^* q(Y)  h.
\] 
 Therefore $q(Y)\not\succeq 0$ and the proof is complete.
\end{proof}

\subsection{$\Gat$-Convex polynomials}\label{ssec:bicvxPoly}
A symmetric   $p\in M_\mu(\C\ax)$ is a 
 \df{$\Gat$-convex polynomial}  if, for each 
 $(X,V)\in \Coup$,
\[
  (I_\mu\otimes V)^* p(X)(I_\mu\otimes V) - p(V^*XV) \succeq 0.
\] It is a {\bf $\Gat$-concave polynomial} if $-p$ is $\Gat$-convex.
 
\subsubsection{Free semialgebraic sets} 
\label{sec:semialgsets}
 Given a symmetric polynomial $p\in M_\mu(\C\ax)$ with $p(0)\succ0$ 
  and a positive integer $n$, let 
\[
\wcD_p(n)=\{X\in \gtupn : p(X)\succeq 0\}
\]
   and let $\cD_p(n)$ denote the closure of
\[
  \spos_p(n) =\{X\in \gtupn: p(X)\succ 0\}.
\]
 Let $\wcD_p$ denote the sequence $(\wcD_p(n))_n.$ Likewise
 let $\spos_p = (\spos_p(n))_n$ and $\cD_p=(\cD_p(n))_n.$ The sets $\wcD_p$,
  $\cD_p$, and $\spos_p$ are
 free analogs of basic semialgebraic sets. We refer to all of these (possibly 
 distinct) sets as {\bf free semialgebraic sets}. As an example, the sets $\TV^d$
 of \eqref{e:TV} are free semialgebraic.
 The inequalities arising from signal flow diagrams give rise to free semialgebraic
 sets, or more generally sets defined by rational inequalities.

Free semialgebraic sets that have additional geometric properties, such as being star-like, 
satisfy cleaner versions of our main results, e.g.~Corollary \ref{cor: starseparation}.
One of our main goals along the lines of \cite{HMannals}
is to develop constrained simple representations of semialgebraic sets with certain geometric properties, 
that is, represent them as a positivity set of a $\Gat$-pencil.

\begin{proposition}
\label{p:concave-convex}
If $p\in M_\mu(\C\ax)$ is a $\Gat$-concave polynomial, then $\wcD_p$ and $\spos_p$ 
are $\Gat$-convex.
\end{proposition}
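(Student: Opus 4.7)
The plan is to unpack definitions and apply the $\Gat$-concavity inequality directly. First I would verify that both $\wcD_p$ and $\spos_p$ are free sets. Closure under direct sums follows from $p(X\oplus Y) = p(X)\oplus p(Y)$, which is positive (semi)definite iff each block is; closure under simultaneous unitary similarity follows from $p(U^*XU) = (I_\mu\otimes U)^* p(X)(I_\mu\otimes U)$, noting that $I_\mu \otimes U$ is unitary when $U$ is; and closure under restriction to reducing subspaces is the same calculation for the restriction isometry. All of these are standard properties of tensor evaluation of matrix-valued free polynomials.

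The substantive content is closure under $\Gat$-pair conjugation. Suppose $X \in \wcD_p(n)$ and $(X,V)\in \Coup$, where $V:\C^m \to \C^n$ is an isometry. Since $p$ is $\Gat$-concave, $-p$ is $\Gat$-convex, which rearranges to
\[
p(V^*XV) \succeq (I_\mu\otimes V)^* p(X) (I_\mu\otimes V).
\]
Because $V$ is an isometry, so is $I_\mu\otimes V$, and hence conjugation by $I_\mu\otimes V$ preserves positive semidefiniteness. Therefore $p(X)\succeq 0$ forces $p(V^*XV)\succeq 0$, giving $V^*XV\in \wcD_p(m)$.

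For $\spos_p$ the same inequality does the job, but now I would exploit that conjugation by the isometry $I_\mu\otimes V$ preserves \emph{strict} positivity as well: if $p(X)\succ 0$ then $(I_\mu\otimes V)^* p(X)(I_\mu\otimes V)\succ 0$ (the kernel of $I_\mu\otimes V$ is trivial), and the $\Gat$-concavity inequality again yields $p(V^*XV)\succ 0$, so $V^*XV\in \spos_p(m)$.

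I do not foresee a real obstacle: the result is essentially a one-line consequence of the definition of $\Gat$-concavity together with the observation that $I_\mu\otimes V$ inherits the isometry property from $V$. The only subtlety worth flagging is the strict positivity case for $\spos_p$, where one must remember that an isometric conjugation preserves strict positivity (as opposed to a general contractive conjugation, which need not). Everything else is bookkeeping around the definitions of free set and of $\Gat$-pair.
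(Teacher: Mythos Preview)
Your proof is correct and follows essentially the same approach as the paper: apply the $\Gat$-concavity inequality $p(V^*XV)\succeq (I_\mu\otimes V)^*p(X)(I_\mu\otimes V)$ and use that isometric conjugation preserves positive (semi)definiteness. You are merely more explicit than the paper in checking the free-set axioms and in noting why strict positivity is preserved for $\spos_p$.
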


\begin{proof}
If $X \in \wcD_p$ and $(X,V)$ is a $\Gat$-pair, then 
$(I_\mu\otimes V)^*p(X)(I_\mu\otimes V)\succeq 0$ and, 
since $p$ is $\Gat$-concave,
\[
  p(V^*XV)\succeq (I_\mu\otimes V)^*p(X) (I_\mu\otimes V) \succeq 0.
\]
Therefore $V^*XV \in \wcD_p$ and hence $\wcD_p$ is $\Gat$-convex. 
 The same argument shows $\spos_p$ is $\Gat$-convex.
\end{proof}

An $f\in M_{\nu\times \mu}(\C\ax)$ is a {\bf $\Gat$-concomitant} if
\[(I_\mu\otimes V)^* f(X)(I_\nu\otimes V)=f(V^*XV)
\]
 for every $(X,V)\in \Coup$. 

\begin{corollary}
\label{c:gat-neutral-convex}
 If $f\in M_\mu(\C\ax)$ is a $\Gat$-concomitant, then $\wcD_f$ and $\spos_f$ are $\Gat$-convex.
\end{corollary}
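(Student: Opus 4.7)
The plan is to observe that a $\Gat$-concomitant is automatically both $\Gat$-convex and $\Gat$-concave, and then invoke Proposition \ref{p:concave-convex}. Alternatively, and perhaps more cleanly, one can derive the conclusion directly from the concomitant identity without passing through convexity of $f$ itself. I will sketch both viewpoints.

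For the direct approach, suppose $f\in M_\mu(\C\ax)$ is a $\Gat$-concomitant, let $X\in \wcD_f$, and let $V$ be such that $(X,V)\in\Coup$. Since $f(X)\succeq 0$, conjugating by the isometry $I_\mu\otimes V$ preserves positive semidefiniteness, so
\[
  f(V^*XV) = (I_\mu\otimes V)^* f(X) (I_\mu\otimes V)\succeq 0,
\]
where the equality uses the defining concomitant identity. Hence $V^*XV\in \wcD_f$, and since $\wcD_f$ is clearly a free set (as $f$ respects direct sums, unitary conjugation, and restriction to reducing subspaces), it is $\Gat$-convex. The same argument, with $\succeq$ replaced by $\succ$ and noting that conjugation by an isometry $I_\mu\otimes V$ preserves strict positivity, gives the result for $\spos_f$.

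For the conceptual approach, observe that the concomitant identity
\[
  (I_\mu\otimes V)^* f(X)(I_\mu\otimes V) = f(V^*XV)
\]
implies the one-sided inequalities
\[
  (I_\mu\otimes V)^* f(X)(I_\mu\otimes V) - f(V^*XV) \succeq 0
\]
and its negation, so $f$ is simultaneously $\Gat$-convex and $\Gat$-concave. The conclusion then follows from Proposition \ref{p:concave-convex} applied to $f$.

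I expect no real obstacle here; the statement is essentially immediate once one unpacks the definitions. The only mild subtlety to mention is that the concomitant definition is stated for rectangular $f\in M_{\nu\times\mu}(\C\ax)$, whereas $\wcD_f$ and $\spos_f$ require square symmetric $f$, so we are implicitly taking $\nu=\mu$ and using that the concomitant identity forces $f(V^*XV)$ to be self-adjoint whenever $f(X)$ is.
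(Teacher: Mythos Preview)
Your proposal is correct, and your ``conceptual approach'' is exactly the paper's proof: observe that the concomitant identity forces $f$ to be $\Gat$-concave, then invoke Proposition~\ref{p:concave-convex}. Your direct approach simply unwinds that proposition in this special case, so there is no substantive difference.
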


\begin{proof}
 If $f$ is a $\Gat$-concomitant, then $f$ is $\Gat$-concave and hence 
 Proposition \ref{p:concave-convex} applies. 
\end{proof}

\begin{remark}\rm
If  $L$ is a monic $\Gat$-pencil and $f_j\in M_{\mu_j\times \mu}(\C\ax)$ for $j=1,\dots, N$
 are  $\Gat$-concomitant, then
\[
M(x) =  \begin{pmatrix}  I_N & \begin{matrix} f_1 \\ \vdots \\ f_N \end{matrix} \\ 
 \begin{matrix} f_1^* & \dots & f_N^* \end{matrix} & L \end{pmatrix}
\]
 is a $\Gat$-concomitant. Hence  $\cD_M$ is $\Gat$-convex. By taking a Schur complement,
 $\cD_M = \cD_p$, for 
\[
 p = L -\sum f_j^* f_j,
\]
which has the form of a monic $\Gat$-pencil minus a sum of hermitian squares
 of a $\Gat$-concomitant polynomials. 
\qed \end{remark}

Rudimentary classification results 
for partially convex free polynomials
exist in \cite{HHLM}; several classes
of $\Gamma$-convex functions for specific cases %
will be given in the sequel \cite{JKMMP+}.

\section{The Operator Setting}
\label{sec:opset}
In this section, the notion of $\Gat$-convexity
 is extended to tuples of operators. While the matrix case
 is our primary interest,  
apparent defects in the geometry, such as
level sets not ``varying continuously,''
 necessitates 
an appeal to  the penumbral operator case. 
We will see in Subsection \ref{sec:op-to-mat} that tools from the operator
 setting lead to results for matrix $\Gat$-convexity.    The remainder of 
 this section is organized as follows.  Subsection \ref{sec:OCSOT} contains
 preliminary results, including an analog of  Proposition \ref{prop:jp} 
 (see Proposition \ref{prop:opco}) and  the key fact that,
 for a strong operator topology (SOT) closed and bounded free
  set $\oK$, the operator convex
 hull of $\Gat(\oK)$ and the $\Gat$-operator convex hull of $\oK$ 
 are again SOT closed (see Theorem \ref{thm: sotclosed}). 
 Versions of the Effros-Winkler theorem  for operator convex sets
 and operator $\Gat$-convex sets are established in 
 Subsections \ref{sec:opEW} and \ref{sec:gatopEW} respectively. 
 The section concludes  with 
the desired
 applications of operator $\Gat$-convexity
 to matrix $\Gat$-convexity for free semialgebraic sets in
 Subsection \ref{sec:op-to-mat}.

\subsection{Operator  $\Gat$-convexity and the strong operator topology}
\label{sec:OCSOT}
Fix an infinite dimensional separable complex Hilbert space \df{$\cH$}  and let 
\df{$\sa$} denotes the $\gv$-tuples of self-adjoint bounded operators on $\cH$. We equip $\sa$
with the maximum norm $\|X\|=\max\{\|X_1\|,\ldots,\|X_\gv\|\}$. \index{$\|X\|$}
A subset $\oK \subset B(\cH)^\gv$ is a \df{free set} if it is closed under unitary similarity 
and closed under direct sums, where $\cH\oplus \cH \oplus \cdots \oplus \cH$ is
 identified with $\cH.$

By analogy with the matricial theory from Section \ref{sec:generalstuff}, 
let $\Gat=(\gat_1,\dots,\gat_r)$ denote a tuple  of {\it symmetric} free polynomials
 with $\gat_j=x_j$ for $1\le j\le \gv\le \rv$, and let $\PhiG:\sa \to B(\cH)^\rv_{sa}$ denote the
 resulting mapping on self-adjoint operator tuples.
  As before, $(X,V)$ is called a \df{$\Gat$-pair} provided
 $X\in \sa$, $V:\cH \to \cH$ is an isometry, and\looseness=-1
\[
 V^* \PhiG(X)V= \PhiG(V^*XV).
\]
 Let \df{$\oCoup$}  denote the collection of (operator) $\PhiG$-pairs.

\index{$\oK$}
A free set $\oK \subset \sa$ is called \df{operator convex} if whenever $X \in \oK$ and 
$V:\cH\rightarrow \cH$ is an isometry, then $V^*XV \in \oK.$ It is called 
\df{operator $\Gat$-convex} if 
\[
 X\in \oK \, \text{ and } \, (X,V)\in \oCoup \, \implies V^*XV\in \oK.
\] In the special case that $\rv=\gv$ (equivalently $\Gat(x)=x$) 
$\Gat$-convexity reduces to ordinary operator convexity.
The \df{operator $\Gat$-convex hull} of a free set $\oK \subset \sa$ is the intersection of all 
operator $\Gat$-convex sets containing $\oK$ and is denoted {$\gammaopco (\oK).$ 
\index{$\gammaopco$}

It is immediate that Propositions \ref{prop:gammahull} and \ref{prop:jp} have operator analogues.

\begin{proposition}
\label{prop:opco}
If $\oK \subset \sa$ is  a free set, then 
\[
\begin{split}
  &\gammaopco(\oK)=\{V^*XV: X\in \oK, \,(X,V)\in\oCoup\}
  \quad\text{ and}\\
 &\PhiG^{-1}(\opco(\PhiG(\oK))) %
 = \gammaopco(\oK).
 \end{split}
\]
\end{proposition}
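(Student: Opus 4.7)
The plan is to adapt the proofs of Propositions~\ref{prop:gammahull} and~\ref{prop:jp} to the operator setting, with minor bookkeeping changes made convenient by the identification $\cH\oplus\cH\cong\cH$. Denote $\mathscr{M} := \{V^*XV : X\in\oK,\ (X,V)\in\oCoup\}$.

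For the first identity I would first observe $\oK\subset\mathscr{M}$ by taking $V=I_\cH$. Next I would verify $\mathscr{M}$ is a free set: closure under unitary similarity follows because the product of two isometries is an isometry, and closure under direct sums follows from $(V_1\oplus V_2)^*(X_1\oplus X_2)(V_1\oplus V_2) = V_1^*X_1V_1\oplus V_2^*X_2V_2$ together with the fact that $\Gat$ distributes over direct sums, so that $(X_1\oplus X_2,\, V_1\oplus V_2)\in\oCoup$ whenever $(X_i,V_i)\in\oCoup$. To see $\mathscr{M}$ is operator $\Gat$-convex, suppose $Z=V^*XV\in\mathscr{M}$ and $(Z,W)\in\oCoup$. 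Then $VW$ is an isometry and
\[
(VW)^*\Gat(X)(VW) = W^*\Gat(V^*XV)W = W^*\Gat(Z)W = \Gat(W^*ZW) = \Gat((VW)^*X(VW)),
\]
so $(X,VW)\in\oCoup$ and hence $W^*ZW=(VW)^*X(VW)\in\mathscr{M}$. Conversely, any operator $\Gat$-convex set containing $\oK$ must by definition contain every element of $\mathscr{M}$, yielding $\mathscr{M}=\gammaopco(\oK)$.

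For the second identity, the forward direction is direct: if $X\in\gammaopco(\oK)$, the first part writes $X=V^*YV$ with $Y\in\oK$ and $(Y,V)\in\oCoup$, whence $\PhiG(X)=V^*\PhiG(Y)V\in\opco(\PhiG(\oK))$. For the reverse direction I would first observe that $\PhiG(\oK)$ is itself a free set in $B(\cH)^\rv_{sa}$ because polynomial evaluation commutes with direct sums and unitary similarity; the already-established first part, applied with trivial $\Gat$, then gives the explicit description $\opco(\PhiG(\oK))=\{V^*ZV : Z\in\PhiG(\oK),\ V\text{ isometry}\}$. Given $\PhiG(X)\in\opco(\PhiG(\oK))$, write $\PhiG(X)=V^*\PhiG(Y)V$ with $Y\in\oK$; comparing the first $\gv$ coordinates (where $\gat_j=x_j$) gives $X=V^*YV$, and the remaining coordinates then encode exactly the $\Gat$-pair condition $V^*\Gat(Y)V=\Gat(V^*YV)$, so $(Y,V)\in\oCoup$. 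A final appeal to the first part places $X$ in $\gammaopco(\oK)$.

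There is essentially no substantive obstacle here: the entire argument is a line-for-line transcription of the matrix-level proofs, with free-set bookkeeping absorbed into $B(\cH)$ via $\cH\oplus\cH\cong\cH$. The only mild subtlety worth flagging is the need to know $\PhiG(\oK)$ is a free set before extracting the explicit description of its operator convex hull, and that step itself is routine.
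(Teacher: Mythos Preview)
Your proof is correct and follows essentially the same approach as the paper, which simply remarks that Propositions~\ref{prop:gammahull} and~\ref{prop:jp} have operator analogues and leaves the details implicit. You have supplied those details faithfully, including the one point that differs from the matrix case (namely that in the operator setting a free set need only be closed under direct sums and unitary similarity, so no reducing-subspace check is required for $\mathscr{M}$).
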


We now show that the operator 
convex hull of a bounded SOT-closed free set is again SOT-closed, 
eliminating certain technical difficulties in absence of a Caratheodory-type theorem for $\Gat$-convexity. The proof 
uses a Heine-Borel type compactness principle from operatorial noncommutative function theory, 
which was previously applied by
 \cite{Mancuso}.

 \begin{lemma}[\protect{\cite[Lemma 4.5 and Remark 4.6]{Mancuso}}]
 \label{l:shiftform}
Let $X_n$ be a bounded sequence of operator tuples in  $B(\cH)^{\gv}$. 
Then there exists a sequence $U_n$ of unitary
operators on $\cH$ and a subsequence along which 
$U_n^*X_nU_n$ and $U_n^*$ both converge in SOT.
 \end{lemma}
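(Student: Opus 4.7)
The plan is to combine a WOT-compactness argument with a unitary rearrangement that exploits the infinite-dimensionality of $\cH$ to upgrade WOT convergence to SOT convergence. First, I would rescale so that $\|X_n\|\le 1$. Since $\cH$ is separable, the closed unit ball of $B(\cH)$ is WOT-compact and WOT-metrizable, so a diagonal extraction across the $\gv$ coordinates yields a subsequence, not renamed, along which $X_n\to X^\infty$ in WOT for some $X^\infty\in\sa$. The residuals $R_n:=X_n-X^\infty$ then converge to $0$ in WOT, though in general not in SOT.

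The main step is a shift-type rearrangement. Fix a unitary identification $\cH\cong\bigoplus_{k=0}^\infty \cH_k$ with each $\cH_k\cong \cH$, together with the associated embedding isometries $\iota_k:\cH\to\cH_k$, and choose unitaries $U_n$ on $\cH$ which, in this decomposition, put $X_n$ into the block form ``$X^\infty$ on the zeroth summand plus an isomorphic copy of $R_n$ supported on $\bigoplus_{k\ge n}\cH_k$.'' The point is that the infinite-dimensional slack of $\cH$ lets one unitarily dilate each $X_n$ so that its WOT-limit occupies a fixed head subspace while its WOT-null residual is carried in a tail subspace that moves ``out to infinity'' as $n\to\infty$. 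For any vector concentrated on the head, the tail contribution becomes asymptotically negligible in norm, which will give SOT convergence of $U_n^*X_nU_n$ to the operator implemented by $X^\infty$ on the head summand.

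The main obstacle, and the reason for the theorem's subtlety, is arranging $U_n^*$ itself to converge in SOT simultaneously. I would construct the $U_n$ compatibly with a fixed countable dense subset $\{\xi_m\}\subset\cH$: at stage $n$, one requires $U_n^*$ to agree, up to an error of $2^{-n}$, with a fixed target shift isometry (moving $\cH$ into the zeroth summand) on approximations to $\xi_1,\dots,\xi_n$. This is feasible because a unitary prescribed on a finite-dimensional subspace can always be extended, and the remaining degrees of freedom on the infinite-dimensional tail suffice to place the residual $R_n$ as needed. A final diagonal extraction over $\{\xi_m\}$ then guarantees that $U_n^*\xi_m$ is Cauchy for each $m$, so $U_n^*$ converges in SOT to some contraction $W$. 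The crux of the proof is the bookkeeping required to satisfy both demands at once — realizing the ampliated block form of $U_n^*X_nU_n$ while also aligning $U_n^*$ on the prescribed dense sequence — which is exactly where the infinite-dimensional flexibility of $\cH$ is essential.
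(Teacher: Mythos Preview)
The paper does not prove this lemma; it is quoted from \cite{Mancuso}, so there is no in-paper argument to compare against and your sketch must stand on its own.

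There is a genuine gap at the ``shift-type rearrangement'' step. You assert that unitaries $U_n$ can be chosen so that $U_n^* X_n U_n$ takes the block form ``$X^\infty$ on $\cH_0$ direct-summed with a copy of $R_n$ supported on $\bigoplus_{k\ge n}\cH_k$,'' but unitary conjugation preserves the unitary equivalence class, and the \emph{additive} splitting $X_n = X^\infty + R_n$ cannot in general be converted into a \emph{direct-sum} splitting. Concretely, take $\gv=1$ and $X_n = I + P_n$, where $P_n$ is the rank-one projection onto the $n$-th basis vector. Then $X^\infty = I$, $R_n = P_n$, and your target operator is $T = I_{\cH_0}\oplus 0 \oplus\cdots\oplus 0 \oplus Q$ for some rank-one projection $Q$ on the tail. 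But $U_n^* X_n U_n = I + U_n^* P_n U_n \succeq I$ for \emph{every} unitary $U_n$, whereas $T$ vanishes on $\cH_1$; hence $U_n^*X_nU_n$ can neither equal $T$ nor even converge to it weakly. The phrase ``unitarily dilate'' suggests you may be conflating conjugation with dilation, but the lemma permits only unitaries on $\cH$ itself. Your final paragraph, on arranging SOT convergence of $U_n^*$ via a dense sequence, is reasonable in spirit, yet it rests on the block-form construction, which does not exist. A working argument must control finer data than the bare WOT limit: along a further subsequence one also forces the second-order inner products $\langle X_{n,j}e_k, X_{n,j'}e_{k'}\rangle$ to converge, and the $U_n$ are then built to carry the finite families $\{e_k,\, X_{n,j}e_k : k\le n\}$ close to a fixed model realizing the limiting Gram data. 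The infinite-dimensional slack is used to extend those finitely-determined partial isometries to global unitaries, not to pull $X^\infty$ and $R_n$ apart.
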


\begin{theorem}
\label{thm: sotclosed}
Suppose $\oK \subset \sa$ is a free set. If $\oK$ is bounded and  SOT-closed,
then $\opco(\PhiG(\oK))$ and $\gammaopco(\oK)$ are SOT-closed. 
\end{theorem}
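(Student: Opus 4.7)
The plan is to prove the statement for $\opco(\PhiG(\oK))$ first and then reduce the statement for $\gammaopco(\oK)$ to it using Proposition \ref{prop:opco}. That reduction is routine: by Proposition \ref{prop:opco}, $\gammaopco(\oK)=\PhiG^{-1}(\opco(\PhiG(\oK)))$, and since $\oK$ is bounded, so is $\gammaopco(\oK)$ (isometric conjugation is norm-nonincreasing). Any SOT-convergent sequence in $\gammaopco(\oK)$ is therefore bounded, polynomials are SOT-continuous on bounded sets, and the SOT-closedness of $\opco(\PhiG(\oK))$ transfers to $\gammaopco(\oK)$.

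For the main claim, I would take a sequence $Z_n = V_n^*\PhiG(X_n)V_n$ with $X_n\in \oK$ and isometries $V_n\colon\cH\to\cH$, converging in SOT to some $Z$, and aim to write $Z$ in the same form. The key technical move is to apply Lemma \ref{l:shiftform} simultaneously to the bounded self-adjoint tuple $(X_n,\, V_n+V_n^*,\, i(V_n-V_n^*))$, which packages both $X_n$ and $V_n$ into a single self-adjoint tuple. After passing to a subsequence, one obtains unitaries $U_n$ on $\cH$ such that $U_n^*X_nU_n\to X_\infty$, $U_n^*V_nU_n\to V_\infty$, and $U_n^*\to W$, all in SOT. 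SOT-closedness and unitary invariance of $\oK$ give $X_\infty\in\oK$; joint SOT-continuity of multiplication on bounded sets upgrades $(U_n^*V_nU_n)^*(U_n^*V_nU_n)=I$ to $V_\infty^*V_\infty=I$, making $V_\infty$ an isometry; and $\|Wh\| = \lim\|U_n^*h\|=\|h\|$ makes $W$ an isometry.

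Next, rewrite $Z_n=U_nA_nU_n^*$ with $A_n := (U_n^*V_nU_n)^*\PhiG(U_n^*X_nU_n)(U_n^*V_nU_n)$, using the polynomial identity $U_n^*\PhiG(X_n)U_n=\PhiG(U_n^*X_nU_n)$. Each factor of $A_n$ converges in SOT and is uniformly bounded, so $A_n\to A_\infty := V_\infty^*\PhiG(X_\infty)V_\infty$ in SOT. Although we do not have SOT convergence of $U_n$ itself, the WOT limit of $Z_n$ can be identified by hand: for $h,k\in\cH$,
\[
\langle Z_n h,k\rangle = \langle A_nU_n^*h,\,U_n^*k\rangle \longrightarrow \langle A_\infty Wh,\,Wk\rangle,
\]
since $U_n^*h\to Wh$ in norm, $A_n\to A_\infty$ in SOT, and $\|A_n\|$ is uniformly bounded. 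Because SOT convergence implies WOT, we conclude $Z = W^*A_\infty W = (V_\infty W)^*\PhiG(X_\infty)(V_\infty W)$, which lies in $\opco(\PhiG(\oK))$ since $V_\infty W$ is a composition of isometries.

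The subtle point that I expect to be the main obstacle is the asymmetry in Lemma \ref{l:shiftform}: only $U_n^*$, not $U_n$, is guaranteed to converge in SOT. This breaks any direct SOT passage to the limit in $U_nA_nU_n^*$ and forces the WOT calculation displayed above. The reason the argument succeeds is that SOT convergence of $U_n^*h$ on each fixed vector is exactly what one needs when $A_n$ converges in SOT with uniformly bounded norms, while the target $Z$ is already pinned down by the hypothesized SOT convergence of the $Z_n$.
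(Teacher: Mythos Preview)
Your argument is correct and follows essentially the same strategy as the paper: invoke Lemma~\ref{l:shiftform} to obtain unitaries $U_n$ with $U_n^*X_nU_n$, $U_n^*V_nU_n$, and $U_n^*$ all SOT-convergent, then handle the residual $U_n$ factor by a WOT computation. The only cosmetic differences are that the paper applies the lemma directly to $(V_n,X_n)\in B(\cH)^{\gv+1}$ (it is stated for arbitrary, not just self-adjoint, tuples) and factors the product as $(V_n^*U_n)\,[\PhiG(U_n^*X_nU_n)\,U_n^*V_n]$, using that a WOT-limit times an SOT-limit is a WOT-limit, whereas you factor as $U_nA_nU_n^*$ and compute the inner product directly; your self-adjoint packaging of $V_n$ buys SOT convergence of $(U_n^*V_nU_n)^*$, which the paper's grouping simply does not require.
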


\begin{proof}
 By the set equality in Proposition \ref{prop:opco}, it suffices to show
 $\opco(\PhiG(\oK))$ is SOT-closed, since 
 $\Gamma$ is SOT-continuous on bounded sets as it is a free polynomial mapping.

Suppose $Y$ is in the SOT-closure of $\opco(\PhiG(\oK)).$ 
There exist isometries $V_n$ and  $X_n\in \oK$ such that
$V_n^* \PhiG(X_n) V_n\stackrel{SOT}{\to}Y$. 
By Lemma \ref{l:shiftform} applied to $(V_n,X_n)\in B(\cH)^{\gv+1},$
there exist unitaries  $U_n$ such that,
after passing to a subsequence,
 $U_n^* V_n U_n\stackrel{SOT}{\to} V$,
$U_n^* X_nU_n\stackrel{SOT}{\to}X$,
and
$U_n^*\stackrel{SOT}{\to}W$,
where $V,W$ are isometries, and  $X\in \oK$ since $\oK$ is free and SOT-closed.

Since $U_n^* V_n = (U_n^*V_nU_n)U_n^*$, we have  $U_n^*V_n\stackrel{SOT}\to VW$ and {\it a fortiori} that $V_n^*U_n\stackrel{WOT}{\to}W^*V^*$.
Moreover, 
$\PhiG(U_n^*X_nU_n)U_n^* V_n\stackrel{SOT}{\to}\PhiG(X)VW$
since multiplication is SOT-continuous on bounded sets.
Note that 
 if $S_n\stackrel{WOT}\to S$  and $T_n\stackrel{SOT}{\to}T$, then
$S_nT_n\stackrel{WOT}{\to}ST$. Hence
\[
\begin{split}
Y & =  \limsot_n  V_n^* \PhiG(X_n) V_n\\
& = \limsot_n V_n^* U_n \PhiG(U_n^*X_nU_n) U_n^* V_n \\
 & =  \limwot_n  V_n^* U_n [\PhiG(U_n^*X_nU_n)U_n^* V_n] \\
 & =  W^*V^* [\PhiG(X)VW] = (VW)^* \PhiG(X)VW.
\end{split}
\]
Hence 
$
 Y = (VW)^* \PhiG(X) VW \in \opco(\PhiG(\oK)).
 $
\end{proof}

\begin{remark}\rm
Note that, in the context of Theorem \ref{thm: sotclosed},
 as $\opco(\PhiG(\oK))$ is convex and SOT-closed, it is also WOT-closed.
The proof that $\opco(\Gat(\oK))$ is SOT-closed shows in fact that if
$P:\sa \to B(\cH)^\rv_{sa}$ is any free polynomial mapping, then  $\opco(P(\oK))$ is SOT-closed.
\qed \end{remark}

\index{$\wcD_p$} \index{$\spos_p$} \index{$\cD_p$} \index{$\woD_p$}
Given a symmetric noncommutative polynomial
$p\in M_\mu(\C\ax)$,
in addition 
 to $\wcD_p$, $\spos_p$, and $\cD_p$ defined in
 Subsection \ref{sec:semialgsets},
 we consider the following
sets describing its positivity 
on the operator level: 
\[
\begin{split}
\woD_p & = \{X\in \sa : p(X)\succeq 0\}; \\
\sopos_p & = \{X\in \sa: p(X)\succ 0\}; \\
\oD_p & = \overline{\sopos_p}^{SOT}.
\end{split}
\]
We also refer to all of these (possibly distinct) sets as 
\df{free semialgebraic sets}. \index{$\sopos_p$}\index{$\oD_p$}
Observe that, whenever they are bounded, $\oD_p$ and $\woD_p$ are SOT-closed and hence
 Theorem \ref{thm: sotclosed} applies.

\subsection{The Effros-Winkler theorem for operator convex sets}
\label{sec:opEW}
 We now show that a version of the Effros-Winkler theorem holds for bounded, SOT-closed free sets
 in the operator convex case. In Subsection \ref{sec:gatopEW}, we prove a
 version for operator $\Gat$-convex sets.
 Given a monic 
$\Gat$-pencil $L=I_N + \sum A_j\gamma_j$ of  size $N$, or an operator $\Gat$-pencil
 $\widetilde{L}=I_{\widetilde{\cH}}+\sum B_j\gamma_j$ for a separable infinite dimensional
Hilbert space $\widetilde{\cH}$ and $B_j \in B(\widetilde{\cH})_{sa},$
 its \df{evaluation on operator tuples} is defined as
\[
\begin{split}
L(X)&=I_N\otimes I_{\cH}+\sum A_j\otimes \gamma_j(X), \\
\widetilde{L}(X)&=I_{\widetilde{\cH}}\otimes I_{\cH}+\sum B_j\otimes \gamma_j(X)
\end{split}
\] 
 respectively for $X\in \sa$. 

The \df{positivity set}
 (resp. \df{strict positivity set}) of a $\Gat$-pencil $L$ in the operator setting is 
\[
 \woD_L :=\{X \in\sa: L(X)\succeq 0\} \ \ (\text{resp.} \, \sopos_L :=\{X\in \sa: L(X)\succ 0\}).
\]

\begin{theorem}\label{thm:opEW}
Suppose $\oK \subset \sa$ is operator convex, SOT-closed,  and contains $0$.
If $Y\notin \oK,$ then there 
is a positive integer $N$ and a monic linear pencil \[L = I_N  + \sum_{j=1}^\gv A_j x_j,
\]
 where $A_j \in \mathbb S_N(\C),$
 such that $L(\oK)\succeq 0$, but $L(Y)\not\succeq 0$. 

In particular, $\oK$ is the intersection
$\bigcap \woD_L$, where the intersection is over %
monic linear pencils $L$ such that $\woD_L\supset \oK$.

Finally, if $\ell$ is a positive integer, $\cF$ is an $\ell$ 
 dimensional subspace of $\cH$ and $Y=Y^\prime \oplus 0$ with respect to the
 orthogonal decomposition $\cF \oplus \cF^\perp$ of $\cH$, then $L$ can be chosen to have
 size $\ell.$
\end{theorem}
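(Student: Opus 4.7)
The plan is to reduce the operator case to the matricial Effros--Winkler theorem (Theorem~\ref{t:EW}) via a matrix ``shadow'' of $\oK$. Define
\[
\oKp(n) := \{V^*XV : X\in\oK,\ V:\C^n\to\cH\text{ isometry}\}\subset\bbS_n(\C)^\gv,
\]
which is easily checked to be matrix convex and to contain $0$. The key observation to establish at the outset is a \emph{lifting principle}: any monic pencil $L$ of size $N$ positive semidefinite on $\oKp$ is automatically positive semidefinite on $\oK$. Indeed, for $X\in\oK$ and a test vector $h=\sum_{k=1}^N e_k\otimes h_k\in\C^N\otimes\cH$, set $\cF_0=\Span\{h_k\}$ with inclusion $W:\cF_0\hookrightarrow\cH$. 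A direct computation, using that only the matrix elements $\langle X_jh_k,h_l\rangle$ appear in $\langle L(X)h,h\rangle$, gives $\langle L(X)h,h\rangle = \langle L(W^*XW)h,h\rangle\ge 0$, since $W^*XW\in\oKp(\dim\cF_0)$.

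Handle first the size-$\ell$ statement: $Y=Y'\oplus 0$ on $\cF\oplus\cF^\perp$ with $\dim\cF=\ell$, so $Y'\in\bbS_\ell(\C)^\gv$ once $\cF$ is identified with $\C^\ell$. The central claim is that $Y'\notin\overline{\oKp(\ell)}$. Suppose the contrary and pick $A_k=V_k^*X_kV_k\to Y'$ in norm with $X_k\in\oK$ and $V_k:\C^\ell\to\cH$ isometries. Direct-sum freeness gives $X_k\oplus 0\in\oK$ under $\cH\oplus\cH\cong\cH$, and operator convexity applied to the isometry $\tilde V_k:=V_k\oplus I_{\cF^\perp}:\C^\ell\oplus\cF^\perp\to\cH\oplus\cH$ (viewed between copies of $\cH$) yields $A_k\oplus 0 = \tilde V_k^*(X_k\oplus 0)\tilde V_k\in\oK$. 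Since $A_k\oplus 0\to Y'\oplus 0 = Y$ in SOT and $\oK$ is SOT-closed, this forces $Y\in\oK$, a contradiction. Theorem~\ref{t:EW} applied to $\overline{\oKp}$ then produces a monic linear pencil $L$ of size $\ell$ with $L(\overline{\oKp})\succeq 0$ and $L(Y')\not\succeq 0$. The lifting principle gives $L(\oK)\succeq 0$, while the block decomposition $L(Y)=L(Y')\oplus(I_\ell\otimes I_{\cF^\perp})$ gives $L(Y)\not\succeq 0$.

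For a general $Y\notin\oK$, SOT-closedness yields a basic SOT-neighborhood $\{X:\|(X_j-Y_j)h_i\|<\epsilon\}$ of $Y$ disjoint from $\oK$. Choose a finite-dimensional $\cF$ containing $h_1,\ldots,h_m$ together with every $Y_jh_i$, and set $\tilde Y := P_\cF Y P_\cF$, which decomposes as $Y_\cF\oplus 0$ on $\cF\oplus\cF^\perp$. The construction gives $(\tilde Y_j-Y_j)h_i = 0$, placing $\tilde Y$ in the neighborhood and hence outside $\oK$. Applying the previous paragraph to $\tilde Y$ produces a monic pencil $L$ of size $\dim\cF$ with $L(\oK)\succeq 0$ and $L(\tilde Y)\not\succeq 0$; and $L(Y)\not\succeq 0$ follows from $\langle L(Y)\xi,\xi\rangle = \langle L(\tilde Y)\xi,\xi\rangle$ for any $\xi$ supported in $\cF$, because only the matrix elements $\langle Y_jf,f'\rangle = \langle\tilde Y_jf,f'\rangle$ for $f,f'\in\cF$ enter. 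The ``in particular'' statement is then immediate: each $Y\notin\oK$ is separated from $\oK$ by some $\woD_L$ in the family.

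The hardest step is the closedness assertion $Y'\notin\overline{\oKp(\ell)}$. Since $\oK$ is not assumed bounded, the SOT-compactness tools of Theorem~\ref{thm: sotclosed} do not directly apply, and one must instead exploit direct-sum freeness and operator convexity via the trick $X_k\mapsto X_k\oplus 0,\ V_k\mapsto V_k\oplus I$ to convert a norm-convergent sequence in the matrix shadow into a genuinely SOT-convergent sequence inside $\oK$ with no uniform control on $\|X_k\|$.
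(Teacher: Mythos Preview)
Your proof is correct and follows essentially the same strategy as the paper: introduce the matricial shadow $\oKp$, reduce to Theorem~\ref{t:EW}, and lift back. The paper packages the key facts as Lemma~\ref{lem:contraction} (closure of $\oK$ under conjugation by contractions) and Proposition~\ref{prop:mppa}, using the contraction lemma to show $Y'\in\oKp\Rightarrow Y'\oplus 0\in\oK$ and hence that $\oKp$ is closed, and handling a general $Y$ via the compressions $P_nYP_n\to Y$; you accomplish the same ends by the direct-sum maneuver $X_k\mapsto X_k\oplus 0$, $V_k\mapsto V_k\oplus I_{\cF^\perp}$ and by an explicit basic SOT-neighborhood to locate a finite-rank compression $\tilde Y\notin\oK$. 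These are minor variations on the same argument.
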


The proof of Theorem \ref{thm:opEW} uses Proposition \ref{prop:mppa} below, which
 in turn uses the following Lemma.

\begin{lemma}\label{lem:contraction}
 If $\oK \subset \sa$  is operator convex and contains $0$, then $\oK$ is closed
 under conjugation by contractions: if $X \in \oK$, $C\in B(\cH)$ and 
 $\Vert C\Vert \leq 1,$ then $C^*XC \in \oK$.

 In particular, if $P\in B(\cH)$ is a projection and $Y\in \oK,$ then
 $PYP = Y^\prime \oplus 0\in \oK.$
\end{lemma}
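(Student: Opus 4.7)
The plan is to use the standard Stinespring-type dilation trick: realize $C^*XC$ as the compression of $X\oplus 0$ by an isometry, and then invoke operator convexity. The only subtlety is bookkeeping the identification $\cH\oplus\cH\cong\cH$, which is available because $\cH$ is separable and infinite-dimensional.

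First I would note that since $0\in\oK$ and $X\in\oK$, the direct sum $X\oplus 0$ (acting on $\cH\oplus\cH$) belongs to $\oK$ after identification with $\cH$ via a fixed unitary; this uses only that $\oK$ is a free set (closed under direct sums and unitary similarity).  Next, I would introduce the \emph{defect operator} $D=(I-C^*C)^{1/2}$, which is a well-defined positive contraction because $\|C\|\le 1$ forces $I-C^*C\succeq 0$.  Then the block column
\[
V=\begin{pmatrix} C\\ D\end{pmatrix}\colon \cH\to\cH\oplus\cH
\]
satisfies $V^*V=C^*C+D^2=I$, so $V$ is an isometry.  A short block computation gives, entry-by-entry in the tuple,
\[
V^*(X\oplus 0)V=C^*X\,C+D\cdot 0\cdot D=C^*XC.
\]
Composing $V$ with the fixed unitary $\cH\oplus\cH\to\cH$ produces an honest isometry $\widetilde V\colon\cH\to\cH$ with $\widetilde V^*(X\oplus 0)\widetilde V=C^*XC$ (up to the same identification).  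Because $\oK$ is operator convex, compression of the element $X\oplus 0\in\oK$ by the isometry $\widetilde V$ lies in $\oK$, giving $C^*XC\in\oK$ as desired.

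For the second statement I would simply specialize $C=P$.  Any orthogonal projection has norm at most $1$, hence is a contraction, so $PYP\in\oK$ by the first part; the block form $PYP=Y'\oplus 0$ with respect to $\cF\oplus\cF^\perp$ is immediate from the definition of $P$ as the projection onto $\cF$.

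The main (mild) obstacle is the identification issue, since the operator-convexity definition is phrased with isometries $V\colon\cH\to\cH$ rather than $V\colon\cH\to\cH\oplus\cH$; I would handle this once at the outset by fixing a unitary $\cH\oplus\cH\to\cH$ and using that $\oK$ is closed under unitary similarity and direct sums. Everything else is the standard $2\times 2$ dilation of a contraction to an isometry.
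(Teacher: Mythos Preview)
Your proof is correct and is essentially identical to the paper's: both form $X\oplus 0\in\oK$, build the isometry $\begin{pmatrix}C\\(I-C^*C)^{1/2}\end{pmatrix}$, and compress. Your extra care with the identification $\cH\oplus\cH\cong\cH$ just makes explicit what the paper absorbs into its standing convention.
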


\begin{proof}
If $X \in\oK$, then 
$Z:=\begin{pmatrix} X & 0 \\ 0 & 0\end{pmatrix}\in\oK.$ 
For a contraction $C,$ define the isometry
\[W:= \begin{pmatrix} C \\ (1-C^*C)^{1/2} \end{pmatrix}.
\] 
Now,  $C^*XC=W^*ZW\in\oK$.
\end{proof}

 To a  free subset $\oK\subset \sa$ we associate a matrix convex set $\oKp$.
  Given a positive integer $n$, let \df{$\oKp(n)$}
 denote the set of tuples $X\in \gtupn$ of the form $V^* YV$, where
 $V:\C^n\to \cH$ is an isometry and $Y\in \oK$.  Note that, by Lemma
 \ref{lem:contraction},  $VV^* \, Y \, VV^* \in \oK$ and hence
 if $Y^\prime \in \oKp$, then $Y^\prime\oplus 0\in \oK.$

\begin{proposition}\label{prop:mppa}
If $\oK\subset\sa$ is operator convex,  SOT-closed and contains $0,$ then 
\begin{enumerate}[\rm (a)]
\item  $\oKp$ is a closed matrix convex set containing $0;$ 
\item \label{i:mppa2} 
$\displaystyle 
\oK=\overline{\{Y^\prime\oplus 0\mid Y^\prime\in\oKp\}}^{SOT}; $
\item \label{i:mppa4}  if $Y\not\in\oK$, then there is an $N$ and
   an isometry $V:\C^N\to\cH$ so that $V^*YV\not\in\oKp(N);$
\item \label{i:mppa3}
 if $Y^\prime\in \bbS_N(\C)^g$ and $Y^\prime\oplus 0 \notin \oK,$ then there is a monic linear pencil $M$
 of size $N$ such that $M(\oK)\succeq 0,$ but $M(Y^\prime)\not\succeq 0.$
\end{enumerate}
\end{proposition}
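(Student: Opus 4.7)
The plan is to prove parts (a)--(d) in order, each leveraging the previous. For part (a), matrix convexity and containment of $0$ in $\oKp$ are routine from the definitions together with freeness of $\oK$: $X_1\oplus X_2 = (V_1\oplus V_2)^*(Y_1\oplus Y_2)(V_1\oplus V_2)$, isometric conjugation factors as $W^*V^*YVW = (VW)^*Y(VW)$, and $0 = V^*0V$. The real work is closedness. Given $X_k = V_k^*Y_kV_k\to X$ in $\gtupn$, Lemma \ref{lem:contraction} lets me replace $Y_k$ by $V_kV_k^*Y_kV_kV_k^*\in\oK$, which is supported on $\range(V_k)$ with $\|Y_k\| = \|X_k\|$ uniformly bounded. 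Fixing an isometry $V:\C^n\to\cH$ and, for each $k$, a unitary $U_k\in B(\cH)$ with $U_kV_k = V$ (available because $\cH$ is infinite-dimensional), the tuple $Z_k := U_kY_kU_k^*\in\oK$ is supported on $\range(V)$ and equals $VX_kV^*$, so $Z_k\to VXV^*$ in norm; SOT-closure of $\oK$ places $VXV^*$ in $\oK$, whence $X = V^*(VXV^*)V\in\oKp(n)$.

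For part (b), the containment $\{Y'\oplus 0 \mid Y'\in\oKp\}\subseteq\oK$ uses Lemma \ref{lem:contraction}: if $Y' = V^*YV$ with $Y\in\oK$ and $P := VV^*$, then $PYP = VY'V^*\in\oK$ is unitarily equivalent to $Y'\oplus 0$, which freeness then places in $\oK$; SOT-closure covers the closure. For the reverse, given $Y\in\oK$, take finite-rank projections $P_\alpha\nearrow I$ strongly, write $P_\alpha = V_\alpha V_\alpha^*$, and set $Y'_\alpha := V_\alpha^*YV_\alpha\in\oKp(n_\alpha)$; then $P_\alpha YP_\alpha = V_\alpha Y'_\alpha V_\alpha^*$ is unitarily equivalent to $Y'_\alpha\oplus 0$ and converges to $Y$ in SOT. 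Part (c) is then the contrapositive of (b): if $V^*YV\in\oKp(N)$ for every $N$ and every isometry $V:\C^N\to\cH$, the same net-approximation exhibits $Y$ as an SOT-limit of tuples unitarily equivalent to members of $\{Y'\oplus 0 \mid Y'\in\oKp\}$, placing $Y$ in $\oK$.

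For part (d), part (a) makes $\oKp$ a closed matrix convex set containing $0$, so Theorem \ref{t:EW} applies. The forward direction of (b) gives $Y'\in\oKp(N)\Rightarrow Y'\oplus 0\in\oK$; contrapositively, $Y'\oplus 0\notin\oK$ forces $Y'\notin\oKp(N)$, and Effros-Winkler produces a monic linear pencil $M$ of size $N$ with $M\succeq 0$ on $\oKp$ but $M(Y')\not\succeq 0$. To upgrade to $M(\oK)\succeq 0$: for any $Y\in\oK$ and any isometry $V:\C^n\to\cH$, $V^*YV\in\oKp(n)$, so
\[
 (I_N\otimes V)^* M(Y)(I_N\otimes V) = M(V^*YV)\succeq 0,
\]
and since finite-dimensional ranges of such $V$ exhaust $\cH$, $M(Y)\succeq 0$.

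The main obstacle is the closedness assertion in part (a): translating a norm-limit in the finite-dimensional space $\gtupn$ into an SOT-limit statement in $\sa$ requires synchronizing the approximating isometries $V_k$ by unitary conjugation, which makes essential use of the infinite-dimensionality of $\cH$. Once this is in hand, the remaining parts are essentially bookkeeping combined with a direct application of Theorem \ref{t:EW}.
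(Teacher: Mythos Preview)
Your proof is correct and follows essentially the same approach as the paper's. The only cosmetic differences are that the paper handles closedness in (a) a bit more directly by invoking the observation $Y'\in\oKp\Rightarrow Y'\oplus 0\in\oK$ (so $Y_m\oplus 0\to Y\oplus 0$ in norm and SOT-closure finishes) rather than explicitly aligning the isometries via unitaries, and in (d) the paper deduces $M(\oK)\succeq 0$ from part (b) together with SOT-continuity of $M$ rather than your compression argument; one small imprecision worth fixing is that in (b) you should take the $P_\alpha$ to be projections onto the first $n$ basis vectors so that $P_\alpha YP_\alpha$ is literally of the form $Y'_\alpha\oplus 0$, since ``unitarily equivalent to'' does not suffice to place $Y$ in the SOT-closure of $\{Y'\oplus 0\}$.
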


\begin{proof}
 A routine argument shows $\oKp$ is (a free set and) matrix convex. 
 If $(Y_m)$ is a sequence from $\oKp(n)$ that converges to $Y$, then
 $(Y_m\oplus 0)$ is a sequence from $\oK$ that converges in norm, and hence in SOT,
 to $Y\oplus 0.$ It follows that $Y\in \oKp(n)$ and hence $\oKp$ is closed.

 If $Y^\prime\in \oKp,$ then $Y^\prime\oplus 0\in \oK$ as already noted.
 Conversely,  given $X\in\oK$ and a sequence $P_n$ of finite rank projections onto the 
 first $n$ basis vectors of $\cH$,
 we have $P_nXP_n\stackrel{SOT}{\to}X$, $P_nXP_n \in \oK$ by Lemma \ref{lem:contraction}
 and $P_nXP_n$ has the form $Y_n\oplus 0$ for
 $Y_n\in \oKp(n).$ Item \ref{i:mppa2} follows.\looseness=-1

 Item \ref{i:mppa4} follows from item \ref{i:mppa2}.

 To prove item \ref{i:mppa3},  since $Y^\prime \oplus 0\notin \oK,$ it follows that 
  $Y^\prime \notin\oKp(N).$
  Since $\oKp$ is closed and matrix convex, Theorem \ref{t:EW} implies
  there is a monic linear pencil $M$
  of size $N$ such that $M(\oKp)\succeq 0,$ but $M(Y^\prime)\not\succeq 0.$ 
  By item \ref{i:mppa2} and SOT-continuity of $M,$ it follows that
  $M(\oK)\succeq 0.$
\end{proof}

\begin{proof}[Proof of Theorem~\ref{thm:opEW}]
 By Proposition \ref{prop:mppa}, there is an $N$ and an isometry $V:\C^N\to\cH$
 such that $Y^\prime=V^*YV\notin \oKp(N)$.   Hence $Y^\prime \oplus 0\notin \oK$
 and therefore, by Proposition \ref{prop:mppa}\ref{i:mppa3}, there is a monic linear pencil
 $M$ of size $N$ such that $M(\oK)\succeq 0,$ but $M(Y)=M(Y^\prime)\oplus I \not\succeq 0.$

 To prove the last statement, note that, given the form of $Y$ and the hypotheses
 of the theorem,   $Y \not\in \oK$ if and only if $Y^\prime \not\in \oKp(\ell).$
\end{proof}

\subsection{Hahn-Banach separation for operator $\Gat$-convex sets}
\label{sec:gatopEW}
Combining Theorems \ref{thm: sotclosed} and \ref{thm:opEW} yields 
Theorem \ref{thm:GatOPEW} below. It may be
seen as an improvement of Theorem \ref{thm:jp} 
for bounded,  SOT-closed operator $\Gat$-convex sets since it does not require that
the (operator) convex hull of $\PhiG(\oK)$ be closed a priori.

\begin{theorem}
\label{thm:GatOPEW}
Suppose $\oK \subset \sa$ is SOT-closed, operator $\Gat$-convex, bounded, and that $0 \in \opco(\PhiG(\oK)).$ 
For each $Y \notin \oK,$ there is a positive integer $N$ an a monic linear pencil
\[ 
    M = I_N  + \sum_{j=1}^\rv A_j x_j, 
\]
 of size $N$ such that 
 $M(\opco(\PhiG(\oK)))\succeq 0$, but $M(\PhiG(Y))\not\succeq 0$.
 Thus the $\Gat$-pencil $L =M\circ \Gat = I_N+\sum A_j\gat_j$ is positive semidefinite on $\oK$,
 but $L(Y)\not\succeq 0$.

 In particular, $\oK=\bigcap \woD_L$, where the intersection is over all  monic $\Gat$-pencils
$L$ such that $\woD_L\supset \oK$. %

 Finally, suppose $0 \in \oK$ and that $\Gat(0)=0.$ If $\ell$ is a positive integer, $\cF$ is an $\ell$ 
 dimensional subspace of $\cH$ and $Y=Y^\prime \oplus 0$ with respect to the
 orthogonal decomposition $\cF \oplus \cF^\perp$ of $\cH$, then $L$ and $M$ can be chosen to have
 size $\ell.$
\end{theorem}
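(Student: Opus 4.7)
The plan is to deduce Theorem~\ref{thm:GatOPEW} from the ordinary operator Effros-Winkler theorem (Theorem~\ref{thm:opEW}) by pulling back a separating linear pencil on $\opco(\PhiG(\oK))$ along $\PhiG$. The reduction is: replace the operator $\Gat$-convex set $\oK \subset \sa$ by the operator convex set $\opco(\PhiG(\oK)) \subset B(\cH)^\rv_{sa}$, and the forbidden point $Y$ by $\PhiG(Y)$; then apply the already-established operator convex case.

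First I would verify that $\opco(\PhiG(\oK))$ meets the hypotheses of Theorem~\ref{thm:opEW}. Operator convexity holds by construction; $0 \in \opco(\PhiG(\oK))$ is given; boundedness is inherited from $\oK$ because $\PhiG$ is a (fixed) polynomial mapping sending bounded sets to bounded sets; and SOT-closedness is exactly Theorem~\ref{thm: sotclosed}. This last point is the crucial technical ingredient that makes the whole pullback strategy work. Next I would check $\PhiG(Y) \notin \opco(\PhiG(\oK))$. Since $\oK$ is operator $\Gat$-convex, $\oK = \gammaopco(\oK)$, and Proposition~\ref{prop:opco} yields
\[
  \oK \;=\; \gammaopco(\oK) \;=\; \PhiG^{-1}\bigl(\opco(\PhiG(\oK))\bigr),
\]
so $Y \notin \oK$ forces $\PhiG(Y) \notin \opco(\PhiG(\oK))$.

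Now Theorem~\ref{thm:opEW} produces a positive integer $N$ and a monic linear pencil $M = I_N + \sum_{j=1}^\rv A_j x_j$ with $A_j \in \mathbb S_N(\C)$, positive semidefinite on $\opco(\PhiG(\oK))$ and indefinite at $\PhiG(Y)$. Setting $L = M\circ \Gat = I_N + \sum_{j=1}^\rv A_j \gat_j$ gives a monic $\Gat$-pencil of size $N$ with $L(\oK)\succeq 0$ (since $\Gat(\oK)\subset \opco(\Gat(\oK))$) and $L(Y) = M(\PhiG(Y))\not\succeq 0$. The intersection formula $\oK=\bigcap \woD_L$ is then immediate: the inclusion $\subset$ is trivial, and the reverse is the contrapositive of the separation just proved.

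For the size refinement, assume additionally $0\in \oK$ and $\Gat(0)=0$, and that $Y = Y^\prime\oplus 0$ with respect to $\cF \oplus \cF^\perp$ with $\dim \cF=\ell$. Because $\cF$ reduces $Y$ and $\Gat(0)=0$, evaluating each coordinate polynomial of $\Gat$ on the block-diagonal tuple $Y$ gives $\PhiG(Y) = \PhiG(Y^\prime)\oplus \PhiG(0) = \PhiG(Y^\prime)\oplus 0$, which has exactly the block-diagonal form required by the final clause of Theorem~\ref{thm:opEW}. That clause, applied to $\opco(\PhiG(\oK))$ and $\PhiG(Y)$, then supplies $M$ of size $\ell$, so $L = M\circ \Gat$ is a monic $\Gat$-pencil of size $\ell$. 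I do not expect a serious obstacle here; the only subtlety is that the whole argument rests on $\opco(\PhiG(\oK))$ being SOT-closed, which is supplied by Theorem~\ref{thm: sotclosed} and is the single nontrivial ingredient beyond Theorem~\ref{thm:opEW} itself.
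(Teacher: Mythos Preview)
Your proposal is correct and follows essentially the same route as the paper's own proof: verify that $\opco(\PhiG(\oK))$ is SOT-closed (via Theorem~\ref{thm: sotclosed}), bounded, operator convex, and contains $0$; use Proposition~\ref{prop:opco} to see $\PhiG(Y)\notin\opco(\PhiG(\oK))$; then apply Theorem~\ref{thm:opEW}, and for the size-$\ell$ refinement use $\Gat(Y)=\Gat(Y')\oplus 0$. There is nothing to add.
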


\begin{proof}
The assumptions imply $\opco(\PhiG(\oK))$ is a bounded, operator convex set containing 0. 
It is SOT-closed by Theorem \ref{thm: sotclosed}. If $Y\notin \oK=\gammaopco(\oK),$ 
then by the set equality in Proposition \ref{prop:opco}, $\PhiG(Y)\notin \opco(\PhiG(\oK)).$ 
Therefore, by Theorem \ref{thm:opEW}, there exists a positive integer $N$ and a monic linear pencil 
\[
 M = I_N  + \sum_{j=1}^{\rv} A_j x_j
\] 
of size $N$ such that $M\succeq 0$ on $\opco(\PhiG(\oK)),$ 
but $M(\PhiG(Y))\not \succeq 0.$ 

 In the case of the final assertion, 
 $\Gat(Y)=\Gat(Y')\oplus \Gat(0)=\Gat(Y')\oplus 0.$
  Since $\Gat(Y')$ has size $\ell$ and $\Gat(Y)\notin \opco(\PhiG(\oK)),$ 
  the monic $\Gat$-pencil $L$ can be chosen to have size $\ell$ by Theorem  \ref{thm:opEW}. 
\end{proof}

\begin{remark}\rm
In Theorems \ref{thm:opEW} and \ref{thm:GatOPEW}, while the convex sets
 consist of operator tuples, outliers are separated from these sets 
by a monic pencil of {\it finite} size; that is, a matrix pencil. 
In the particular cases described in their final statements,
the theorems assert further control over this finite size.
\qed \end{remark}

\begin{lemma} 
 \label{l:zeroininterior}
   For an SOT-closed operator convex set $\oK\subset \sa$, the following are equivalent. 
\begin{enumerate}[\rm (i)]
 \item \label{i:zeroi}
   $0$ is in the (norm) interior of $\oK$;
 \item \label{i:zeroii}
    there is a constant $C$ such that if $L=I+\sum A_j x_j$ is monic linear
   pencil that is positive semidefinite on $\oK$, then $\|A_j\|<C$ for all $j$;
 \item \label{i:zeroiii}
  there is an $\epsilon>0$ such that $\pm \epsilon e_j \in \oK$, where
  $e_j \in \sa$ has $k$-th entry $0$ if $k\ne j$ and $I$ if $k=j$.
\end{enumerate}

 Further, if $M$ is a monic linear pencil  such that $M(\oK)\succeq 0$, then $M\succ 0$
 on $\intr(\oK)$.
\end{lemma}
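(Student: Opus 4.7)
The plan is to argue by contradiction. Suppose $X_0 \in \intr(\oK)$ satisfies $M(X_0) \succeq 0$ but $M(X_0) \not\succ 0$, so that $0$ lies in the spectrum of $M(X_0)$. I first reduce to the case that $0$ is actually an eigenvalue: by choosing a finite-dimensional subspace of $\cH$ containing approximate eigenvectors of $M(X_0)$ together with their images under $(X_0)_j$, and compressing $X_0$ to it as in Proposition \ref{prop:mppa}, one lands in the matrix set $\oKp(n)$ where $M$ becomes a genuine matrix and ``$0$ in the spectrum'' forces an actual eigenvector (a compactness/Lemma \ref{l:shiftform}-style argument handles passage between compressions if $0$ is only approximate). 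With this reduction in hand I pick a unit vector $v \in \C^N \otimes \cH$ with $M(X_0)v = 0$, where $N$ is the size of $M = I_N + \sum_j A_j x_j$. Next I extract a linear vanishing condition: since $X_0 \in \intr(\oK)$, there exists $r > 0$ with $X_0 + Y \in \oK$ whenever $\|Y\|<r$, so $M(X_0 + Y) = M(X_0) + \sum_j A_j \otimes Y_j \succeq 0$. Pairing with $v$, using $M(X_0)v=0$, and replacing $Y$ by $-Y$, one obtains $\sum_j \langle (A_j \otimes Y_j)v, v\rangle = 0$ for all $Y$ in a neighborhood of $0$, hence for all $Y \in \sa$ by linearity. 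Restricting to $Y$ supported in a single coordinate yields $\langle (A_j \otimes Y_j)v, v\rangle = 0$ for every $j$ and every self-adjoint $Y_j \in B(\cH)$.

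The heart of the argument is an operator-theoretic translation. Identify $\C^N \otimes \cH$ with the Hilbert--Schmidt space $B(\C^N, \cH)$ via $e_k \otimes u \mapsto |u\rangle\langle e_k|$ and let $T_v$ correspond to $v$. Under this isomorphism $(A \otimes B) v$ translates to $B T_v A^*$, so a short trace computation gives
\[
\langle (A_j \otimes Y_j) v, v\rangle = \tr\!\bigl(Y_j\, T_v A_j T_v^*\bigr).
\]
The operator $C_j := T_v A_j T_v^*$ is self-adjoint and of rank $\le N$; vanishing of $\tr(Y_j C_j)$ for every self-adjoint $Y_j$ (take $Y_j = C_j$) forces $T_v A_j T_v^* = 0$ for each $j$. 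Equivalently, setting $W := \range(T_v^*) \subseteq \C^N$, one has $A_j W \subseteq W^\perp$ for every $j$.

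To close the argument I would re-express $M(X_0)v = 0$ in operator form as $T_v + \sum_j (X_0)_j T_v A_j = 0$; its adjoint reads
\[
T_v^* w \;=\; -\sum_j A_j\, T_v^* (X_0)_j\, w \qquad (w \in \cH).
\]
The left-hand side lies in $W$, while each $T_v^*(X_0)_j w$ also lies in $W$ and then $A_j$ sends it into $W^\perp$ by the previous paragraph; so the right-hand side lies in $W^\perp$. Hence $T_v^* w \in W \cap W^\perp = \{0\}$ for every $w$, forcing $T_v = 0$ and $v = 0$, contradicting $\|v\|=1$. The main obstacle I anticipate is the initial reduction to the eigenvalue case (in the infinite-dimensional setting $0$ may be only in the approximate point spectrum), which requires Proposition \ref{prop:mppa} and a compactness step in the spirit of Lemma \ref{l:shiftform}; a secondary nuisance is the Hilbert--Schmidt bookkeeping to ensure that it is $A$ rather than $A^*$ or $A^T$ that appears in the identity $T_v A_j T_v^* = 0$. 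The remainder is just routine perturbation plus a one-line subspace intersection.
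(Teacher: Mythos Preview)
Your proposal addresses only the final ``Further'' assertion and says nothing about the equivalence of \ref{i:zeroi}, \ref{i:zeroii}, \ref{i:zeroiii}, which is the bulk of the lemma. Those implications are short but they do need to be written down (the paper does \ref{i:zeroi}$\Rightarrow$\ref{i:zeroiii}$\Rightarrow$\ref{i:zeroii} trivially, and for $\neg$\ref{i:zeroi}$\Rightarrow\neg$\ref{i:zeroii} picks $Y\notin\oK$ with $\|Y\|<\delta$, applies Theorem~\ref{thm:opEW}, and reads off $\max_j\|A_j\|\ge \tfrac{1}{\gv\delta}$).

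For the ``Further'' assertion your argument is correct but enormously over-engineered compared to the paper's. The paper simply perturbs along the ray through $0$: since $M$ is monic linear,
\[
M(tX)=(1-t)I+tM(X),
\]
so if $M(X)\succeq 0$ with $0$ in its spectrum then $M(tX)$ has $1-t$ in its spectrum, hence $M(tX)\not\succeq 0$ for $t>1$; but $X\in\intr(\oK)$ forces $tX\in\oK$ for $t$ slightly larger than $1$, a contradiction. This is a two-line computation. In particular the step you flag as the ``main obstacle'' --- reducing from $0$ in the approximate point spectrum to an honest eigenvector via compressions and a compactness argument --- is entirely unnecessary: the spectral identity above requires no eigenvector at all. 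Your Hilbert--Schmidt machinery (the identification $\C^N\otimes\cH\cong B(\C^N,\cH)$, the trace identity, the subspace intersection $W\cap W^\perp$) is sound and is a nice argument in its own right, but it is doing with heavy tools what the affine identity $M(tX)=(1-t)I+tM(X)$ does for free.
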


\begin{proof}
 The implications \ref{i:zeroi} implies \ref{i:zeroiii} is evident.
 To pass from item \ref{i:zeroiii} to item \ref{i:zeroii}, choose $C=\frac{1}{\epsilon}$.

 Now suppose item \ref{i:zeroi} does not hold; that is $0$ is not in the norm interior of $\oK$.  Given $\delta>0$,
 there is a tuple $Y\notin\KK$ with $\|Y\|<\delta$. By Theorem  \ref{thm:opEW},
 there is a monic linear pencil $L=I+\sum A_j x_j$ of finite size such that $L\succeq 0$ on $\KK$,
 but $L(Y)\not\succeq 0.$ Since $I \not\succeq - \sum A_j \otimes Y_j$ and 
\[
 - \sum A_j \otimes Y_j \preceq  \|\sum A_j\otimes Y_j\| \, I \preceq \sum \|A_j \|\, \|Y_j\|\, I
 \preceq \delta \sum \|A_j\| \, I,
\]
 it follows that there is a $j$ such that $\|A_j\|\ge \frac{1}{\gv\delta}$. Hence 
 item \ref{i:zeroii} does not hold and the proof that items
  \ref{i:zeroi}, \ref{i:zeroii} and \ref{i:zeroiii} are equivalent is complete.

 To complete the proof, suppose $X\in \oK$ and $M(X)\succeq 0$, but $M(X)\not\succ 0$.
 Thus, there is a nonzero vector $h$ such that $\langle M(X)h,h\rangle =0$. Since $M$
 is monic linear, $\langle M(tX)h,h\rangle = \|h\|^2 - t \|h\|^2$. Hence, 
 $\langle M(tX)h,h\rangle <0$ for $t>1$. Thus $X$ is not in the interior 
 of $\oK$.
\end{proof}

In Corollary \ref{c:op-gat-separate} below, the topological notions of 
boundary and interior are with respect to the relative
norm topology on $\sa$. Note that, in the case $\rv=\gv$ (equivalently
 $\Gat(x)=x$)
the condition of equation \eqref{intrcontain} in Corollary \ref{c:op-gat-separate}
is automatically satisfied.

\begin{corollary}
\label{c:op-gat-separate}
Suppose $\oK \subset \sa$ is SOT-closed, operator $\Gat$-convex, bounded, 
contains $0$, and $\PhiG(0)=0.$ Suppose also that $0$ is in the (norm) interior of 
$\opco(\PhiG(\oK))$ and that 
\begin{equation}
 \label{intrcontain}
      \intr(\oK)\subset \PhiG^{-1}(\intr(\opco(\PhiG(\oK)))).
\end{equation}
If $Y$ is in the (norm) boundary of $\oK,$ then there exists a  
monic operator $\Gat$-pencil $L=I+\sum A_j \gat_j$ 
 such that $L(X) \succ 0$ for all $X$ in the interior of $\oK$ and such that
  $L(Y)$  is not bounded below by any positive multiple of the identity.
\end{corollary}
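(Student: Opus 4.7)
The plan is to construct $L$ as a direct sum, over the countable family of all monic $\Gat$-pencils with rational coefficients that are positive semidefinite on $\oK$, of those pencils. Write each such $L_\alpha$ as $M_\alpha \circ \Gat$, where $M_\alpha$ is a monic linear pencil nonnegative on the operator convex hull $\opco(\Gat(\oK))$. Since $\oK$ is bounded, so is $\Gat(\oK)$, and by Theorem~\ref{thm: sotclosed} the hull $\opco(\Gat(\oK))$ is SOT-closed; by hypothesis it also contains $0$ in its norm interior, so Lemma~\ref{l:zeroininterior} produces a uniform bound $C$ on the coefficients of every such $M_\alpha$, hence on the coefficients of every $L_\alpha$. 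Consequently $A_j := \bigoplus_\alpha A_{\alpha, j}$ defines a bounded self-adjoint operator on the underlying direct-sum Hilbert space, and $L := I + \sum_j A_j \gat_j$ is a legitimate monic operator $\Gat$-pencil.

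A standard density argument built on Theorem~\ref{thm:GatOPEW} shows $\oK = \woD_L$. The inclusion $\oK \subset \woD_L$ is immediate. Conversely, if $X \notin \oK$, then Theorem~\ref{thm:GatOPEW} produces a monic $\Gat$-pencil indefinite at $X$ and nonnegative on $\oK$; after a small convex combination with the identity (which keeps it indefinite at $X$ and makes it uniformly bounded below on $\oK$) and then perturbation of coefficients to nearby rationals on the bounded set $\oK$, one obtains an $L_\alpha$ in our family with $L_\alpha(X) \not\succeq 0$, so $L(X) \not\succeq 0$. In particular, if $Y \in \partial \oK$ then $L(Y) \succeq 0$; if moreover $L(Y) \succeq \epsilon I$ for some $\epsilon>0$, then norm-continuity of $L$ (guaranteed by the uniform bound on the $A_j$ together with polynomiality of $\Gat$) would place a whole norm neighborhood of $Y$ inside $\woD_L = \oK$, contradicting $Y \in \partial \oK$. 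Hence $L(Y)$ is not bounded below by any positive scalar multiple of $I$.

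The main obstacle, and the sole point where the hypothesis $\intr(\oK) \subset \Gat^{-1}(\intr(\opco(\Gat(\oK))))$ enters essentially, is establishing a uniform strict lower bound for $L$ on $\intr(\oK)$. Fix $X \in \intr(\oK)$, so that $\Gat(X) \in \intr(\opco(\Gat(\oK)))$. If $\Gat(X) = 0$ then $L_\alpha(X) = I$ for every $\alpha$ and $L(X) = I$. Otherwise, since $0 \in \opco(\Gat(\oK))$ and $\Gat(X)$ lies in its norm interior, the ray from $0$ through $\Gat(X)$ can be extended slightly inside the hull: there exists $s > 1$ with $s\Gat(X) \in \opco(\Gat(\oK))$. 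Using the identity $M_\alpha(sZ) = (1-s)I + s M_\alpha(Z)$, nonnegativity of $M_\alpha$ at $s\Gat(X)$ yields
\[
  L_\alpha(X) \;=\; M_\alpha(\Gat(X)) \;\succeq\; \tfrac{s-1}{s}\, I.
\]
Crucially this bound depends only on $X$ and not on $\alpha$, so taking the direct sum gives $L(X) \succeq \tfrac{s-1}{s}\, I \succ 0$, completing the proof.
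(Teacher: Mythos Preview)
Your proof is correct, but it takes a genuinely different route from the paper's.

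The paper constructs $L$ tailored to the given boundary point $Y$: it picks a sequence $Y_n\to Y$ with $Y_n\notin\oK$, applies Theorem~\ref{thm:GatOPEW} to obtain finite monic linear pencils $M_n$ nonnegative on $\opco(\Gat(\oK))$ with $M_n(\Gat(Y_n))\not\succeq 0$, and sets $L=(\bigoplus_n M_n)\circ\Gat$, invoking Lemma~\ref{l:zeroininterior} for the coefficient bound. Strict positivity on $\intr(\oK)$ then comes directly from the last clause of Lemma~\ref{l:zeroininterior} applied to $\opco(\Gat(\oK))$ together with hypothesis~\eqref{intrcontain}; the failure of a positive lower bound at $Y$ follows because the $n$-th block of $L(Y_n)$ is indefinite and $L(Y_n)\to L(Y)$ in norm.

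Your approach instead builds a \emph{universal} pencil, independent of $Y$, by direct-summing all rational monic $\Gat$-pencils nonnegative on $\oK$---essentially the construction behind Corollary~\ref{t:OneOpPencil-intro}---and then proves the stronger fact $\oK=\woD_L$ via a density argument. This costs an extra approximation step (the convex-combination-then-rationalize maneuver), but it buys a single pencil that certifies every boundary point at once. Your ray-extension argument for strict positivity is a pleasant bonus: it yields the quantitative bound $L(X)\succeq\tfrac{s-1}{s}I$, which is exactly what one needs to pass strict positivity through an infinite direct sum, whereas the paper only asserts $L(X)\succ 0$ via the lemma.
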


\begin{proof}
There is a sequence $(Y_n)$ from $\sa$ such that $Y_n \notin \oK$
 and $(Y_n)\rightarrow Y$ in norm. By the first statement in Theorem \ref{thm:GatOPEW},
 there are monic linear pencils $M_n$ of finite size such that $M_n(\opco(\Gat(\oK)))\succeq 0$,
 but $M_n(\Gat(Y_n))\not\succeq 0.$  Since $0$ is in the (norm) interior of $\opco(\PhiG(\oK))$,
 Lemma \ref{l:zeroininterior} implies $M:=\bigoplus_n M_n$ defines a monic pencil whose coefficients
 are bounded operators on a separable Hilbert space. Put $L:=M\circ \Gat.$

 By construction, $M\succeq 0$ on
  $\opco(\PhiG(\oK))$. 
  By Lemma \ref{l:zeroininterior} applied to   $\opco(\PhiG(\oK))$,
  it follows that   $M(Z)\succ 0$
  for $Z\in \intr(\opco(\PhiG(\oK))).$ By the inclusion in (\ref{intrcontain}), if 
  $X\in \intr(\oK)$, then $\Gat(X) \in \intr(\opco(\PhiG(\oK)))$, 
  and $M(\Gat(X))=L(X)\succ 0.$

  Finally, we show  $L(Y)$ is not bounded below by a positive multiple of the identity.
  Since the sequence $Y_n$ tends to $Y$ in norm, $\Gat(Y_n)\rightarrow \Gat(Y)$
  in norm.
As $M_n(\Gat(Y_n))$ is a summand of  $M(\Gat(Y_n))$, it follows that 
$M(\Gat(Y_n))\not \succeq 0$.
Since $M(\Gat(Y_n))$ converges to $M(\Gat(Y))$ in norm,
 there does not exist an $\epsilon>0$ such that 
 $L(Y)=M(\Gat(Y))\succeq \epsilon$.
\end{proof}

\begin{remark}\rm
 In the case of $y^2$-convexity and 
 without hypotheses that guarantee a uniform bound on the coefficients,
 it is not clear how to avoid, in the proof of Corollary \ref{c:op-gat-separate}
 and after scaling, having the pencils $L_n = M_n\circ \Gat$ converge to
 a $\Gat$-pencil $L$ that is not equivalent to a monic $\Gat$-pencil,  such as
\[
 L(x,y) = \begin{pmatrix} 1 & y \\ y & y^2 \end{pmatrix},
\]
 which is always positive semidefinite, but never positive definite.
 We view this as a symptom of the fact that the image of $\Gamma$ in
 this case lies in the boundary of its operator convex hull.
 The same issue arises in describing operator $\Gat$-convex sets defined by a single
 (operator-valued) monic $\Gat$-pencil. 
 \qed
 \end{remark}

\subsection{Applications of Theorem \ref{thm:GatOPEW} to matricial $\Gat$-convexity}
\label{sec:op-to-mat}
We say $p\in M_\mu(\C\ax)$ is \df{\indifferent} 
if $\cD_p=\wcD_p$ and
 the set $\woD_p$ is bounded and $\oD_p=\woD_p$.

It is clear that 
 $\cD_p\subset \wcD_p$ and 
$\oD_p \subset \woD_p$.
However equality may not hold: consider  $p(x)=-(1-x^2)^2$.
An example of a \indifferent polynomial is $p(x,y)=1-x^2-y^{2d}$.

A simple and natural geometric sufficient condition for
 regularity may be described as follows. 
We say $p\in M_\mu(\C\ax)$ is \df{star-like} if the set $\woD_p$ is bounded
and if  $X\in \sa$ and $p(X)\succeq 0,$ 
then $p(tX)\succ 0$ for all $0\le t<1.$ 
If $p$ is star-like and
$\woD_p \neq \varnothing$, then
 $p(0)\succ 0.$

\begin{example}\label{1-x^2-y^{2d}star}\rm
 For $d$ a positive integer, $p(x,y)=1-x^2-y^{2d}$ is star-like since
\[
p(t(x,y))= (1-t^2)+t^2 p(x,y) + t^2(1-t^{2d-2})y^{2d}.
\] Thus, if $p(X,Y)\succeq 0,$ then $p(t(X,Y))\succeq 1-t^2$ for $0\leq t<1.$
\end{example}

\begin{proposition}
\label{prop:star}
If $p\in M_\mu(\C\ax)$ is star-like, then $p$ is a \indifferent polynomial.
\end{proposition}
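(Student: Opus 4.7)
The plan is to unpack the three conditions defining \indifferent{}, namely $\cD_p=\wcD_p$, boundedness of $\woD_p$, and $\oD_p=\woD_p$. Boundedness of $\woD_p$ is built into the definition of star-like, so only the two set equalities need argument, and both will be obtained by interpolating $X$ with $tX$ and letting $t\nearrow 1$.

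For $\cD_p=\wcD_p$, the inclusion $\cD_p\subseteq \wcD_p$ is automatic from norm-continuity of $X\mapsto p(X)$ on $\gtupn$ together with the fact that the positive semidefinite cone is norm-closed. For the reverse, fix $X\in \wcD_p(n)$, so $p(X)\succeq 0$. The star-like hypothesis yields $p(tX)\succ 0$ for all $t\in[0,1)$, so $tX\in \spos_p(n)$; since $tX\to X$ in norm as $t\nearrow 1$, we conclude $X\in \cD_p(n)$. This takes care of the first equality at every level.

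For $\oD_p=\woD_p$, the star-like argument gives $\woD_p\subseteq \oD_p$ by the same interpolation (norm convergence $tX\to X$ implies SOT convergence, and $p(tX)\succ 0$ places $tX$ in $\sopos_p$). The opposite inclusion $\oD_p\subseteq \woD_p$ is where the boundedness hypothesis does work. Suppose $X_\alpha\stackrel{SOT}{\to}X$ with each $X_\alpha\in \sopos_p$. Since $\sopos_p\subseteq \woD_p$ and $\woD_p$ is bounded, the net $(X_\alpha)$ is norm-bounded; hence the net $X$ itself is norm-bounded, and I can (and will) work inside a fixed operator-norm ball throughout. Evaluation of any noncommutative polynomial is SOT-continuous on bounded subsets of $\sa$ because multiplication is jointly SOT-continuous on bounded sets; applying this coordinatewise to each coefficient block of the matrix polynomial $p\in M_\mu(\C\ax)$ gives $p(X_\alpha)\stackrel{SOT}{\to}p(X)$. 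Since each $p(X_\alpha)\succeq 0$ and the positive semidefinite cone in $B(\C^\mu\otimes \cH)_{sa}$ is WOT-closed (hence SOT-closed), we obtain $p(X)\succeq 0$, i.e., $X\in \woD_p$.

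The only step with a genuine subtlety is the last one: confirming that SOT-continuity of the polynomial evaluation holds on bounded subsets of $\sa$, and using that together with the norm-boundedness of $\sopos_p$ to stay inside a ball where joint SOT continuity of multiplication is available. This is the place where the \emph{bounded} part of ``star-like'' is essential; the rest of the proof is driven entirely by the identity $p(tX)\succ 0$ for $t\in[0,1)$ and the continuity of $t\mapsto tX$.
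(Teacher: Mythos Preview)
Your proof is correct and follows the same interpolation idea as the paper: use $tX\to X$ with $p(tX)\succ 0$ to obtain the nontrivial inclusions $\wcD_p\subseteq \cD_p$ and $\woD_p\subseteq \oD_p$. The paper's proof is a two-line sketch that relies on the earlier remark ``It is clear that $\cD_p\subset\wcD_p$ and $\oD_p\subset\woD_p$''; you make that last inclusion explicit and correctly pinpoint where the boundedness of $\woD_p$ is actually used (SOT-continuity of polynomial evaluation on bounded sets), which is arguably more honest than the paper's treatment.
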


\begin{proof}
Clearly, $t\woD_p\subset\intr\oD_p$ for $0<t<1$, 
and $X=\lim_{t\nearrow1}tX$. Similarly, the matrix
case holds.
\end{proof}

For a  \indifferent polynomial $p$, Corollary \ref{cor: starseparation} --
 a separation result for $\cD_p$ under the assumption
 that $\oD_p$ is operator $\Gat$-convex --
 is an immediate consequence of Theorem \ref{thm:GatOPEW}.

\begin{corollary}\label{cor: starseparation}
Let $p\in M_\mu(\C\ax)$ be a  
 \indifferent polynomial such that $\oD_p$ is operator $\Gat$-convex and bounded with
$0\in \cD_p$ and suppose $\PhiG(0)=0.$ If $Y\in \gtupl$ and $Y \not \in \cD_p(\ell),$ then there is
a monic $\Gat$-pencil $L$ of size $\ell$ such that $L\succeq 0$ on $\cD_p$ but $L(Y)\not\succeq 0.$
 In particular, $\cD_p=\cap \wcD_L,$ where the intersection is over
 all  monic $\Gat$-pencils $L$ such that $L(\cD_p)\succeq 0$. 
\end{corollary}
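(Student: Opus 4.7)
The plan is to reduce everything to Theorem~\ref{thm:GatOPEW} applied to the operator set $\oD_p$. First I would check that the hypotheses of that theorem are met: $\oD_p$ is SOT-closed by its very definition as $\overline{\sopos_p}^{\,SOT}$; it is bounded and operator $\Gat$-convex by assumption; since $0\in\cD_p$ we have $p(0)\succeq 0$ (as $\cD_p\subset\wcD_p$), so the operator tuple $0\in\sa$ lies in $\woD_p=\oD_p$; and since $\Gat(0)=0$, we have $0=\Gat(0)\in\Gat(\oD_p)\subset\opco(\Gat(\oD_p))$.

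Next I would lift the obstruction $Y$ to the operator level.  Fix an $\ell$-dimensional subspace $\cF\subset\cH$ and set $\widetilde Y:=Y\oplus 0$ with respect to $\cF\oplus\cF^\perp$.  Because $p$ is regular, $Y\not\in\cD_p(\ell)=\wcD_p(\ell)$ forces $p(Y)\not\succeq 0$, and then
\[
 p(\widetilde Y)=p(Y)\oplus p(0)\not\succeq 0,
\]
so $\widetilde Y\notin\woD_p=\oD_p$.  Applying the final statement of Theorem~\ref{thm:GatOPEW} to $\oK=\oD_p$ with this $\widetilde Y$ (and with $\cF$ as above) produces a monic $\Gat$-pencil $L=I_\ell+\sum A_j\gat_j$ of size $\ell$ with $L(\oD_p)\succeq 0$ and $L(\widetilde Y)\not\succeq 0$.

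To transport the conclusion back to the matricial setting, observe that since $\Gat(0)=0$, one has $L(0)=I_\ell$ and hence, for any matrix tuple $W$,
\[
 L(W\oplus 0)=L(W)\oplus I_\ell,
\]
so $L(W\oplus 0)\succeq 0$ if and only if $L(W)\succeq 0$.  Applied to $W=Y$ this gives $L(Y)\not\succeq 0$.  For positivity on $\cD_p$, take any $X\in\cD_p(n)$; regularity of $p$ gives $p(X)\succeq 0$, so $X\oplus 0\in\woD_p=\oD_p$ and hence $L(X)\oplus I_\ell=L(X\oplus 0)\succeq 0$, forcing $L(X)\succeq 0$.  Thus $L\succeq 0$ on $\cD_p$.

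The ``in particular'' statement is then immediate: one inclusion $\cD_p\subset\bigcap\wcD_L$ is by definition of the intersection, while the reverse follows because any $Y\notin\cD_p(\ell)$ is excluded from some $\wcD_L$ by the pencil just constructed.  I expect the main point of care, rather than a genuine obstacle, is the bookkeeping around the compression/dilation trick $\widetilde Y=Y\oplus 0$ — in particular, verifying that regularity is exactly what lets one convert the matricial condition $Y\notin\cD_p(\ell)$ into the operatorial condition $\widetilde Y\notin\oD_p$ and then convert the separating pencil back without losing the size $\ell$ control provided by Theorem~\ref{thm:GatOPEW}.
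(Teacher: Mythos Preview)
Your proof is correct and follows exactly the route the paper intends: the authors state only that the corollary ``is an immediate consequence of Theorem~\ref{thm:GatOPEW}'' and give no further argument, and your write-up is precisely the unpacking of that sentence. The only quibble is notational: in the display $L(W\oplus 0)=L(W)\oplus I_\ell$, the second summand should be $I_\ell\otimes I_{\cF^\perp}$ (an infinite-dimensional identity), but this does not affect the argument.
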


\begin{example}\rm
\label{eg:TVd}
In light of Example \ref{1-x^2-y^{2d}star} and Corollary \ref{cor: starseparation}, if $p(x,y)=1-x^2-y^{2d}$ and 
$(X,Y)\not \in \cD_p,$ then there is a monic $y^2$-pencil $L$ such that $L\succeq 0$ on $\cD_p$ and such that
$L(X,Y)\not\succeq 0.$ 
 Hence $\cD_p =\cap \wcD_L$, where the intersection is over all
 monic  $y^2$-pencils $L$ that are positive semidefinite on $\cD_p$.
  This example is explored further in Proposition \ref{p:TVd}.
\end{example}

If $p$ is \indifferent with $p(0)=1$ and $\oD_p$ is operator $xy$-convex 
(see Example \ref{eg:xyconvex}), then Corollary \ref{cor: starseparation} 
says $\cD_p$ arises from Bilinear Matrix Inequalities (BMIs). In this case more can be said.
\index{Bilinear Matrix Inequalities}\index{BMI}

\begin{proposition}
 \label{p:BMI}
    Suppose $p\in M_\mu(\C\ax)$ is \indifferent, $p(0)\succ 0$, and  $\oD_p$ is operator
 $xy$-convex. If  $\ell$ is a positive integer and  $Y$ is in the boundary
  of $\cD_p(\ell)$, then there is a monic $xy$-pencil $L$ of size $\ell$ such that $L(X)\succ 0$
  for $X$ in the interior of $\cD_p$, but $L(Y)\not \succ 0$.
\end{proposition}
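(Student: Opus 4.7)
The plan is to mimic the proof of Corollary~\ref{c:op-gat-separate}, but to exploit the final size-$\ell$ assertion in Theorem~\ref{thm:GatOPEW} to ensure the separating pencil has matrix size exactly $\ell$. Since $Y \in \partial \cD_p(\ell)$ and $p$ is \indifferent (so $\cD_p = \wcD_p$), there is a sequence $Y_n \in \mathbb S_\ell(\C)^2 \setminus \cD_p(\ell)$ with $Y_n \to Y$ in norm. Form $\widehat{Y}_n := Y_n \oplus 0 \in \sa$; since $p(\widehat Y_n) = p(Y_n) \oplus p(0)$ has a negative eigenvalue (from $p(Y_n)\not\succeq 0$) even though $p(0)\succ 0$, and since regularity gives $\oD_p = \woD_p$, we have $\widehat{Y}_n \notin \oD_p$. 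Applying the last statement of Theorem~\ref{thm:GatOPEW} to $\oK = \oD_p$---which is SOT-closed and bounded by regularity, operator $xy$-convex by hypothesis, and contains $0$ with $\Gat(0)=0$---produces, for each $n$, a monic $xy$-pencil $L_n = I_\ell + \sum_{j=1}^4 A_j^{(n)}\gat_j$ of size $\ell$ such that $L_n(\oD_p) \succeq 0$ (hence $L_n(\cD_p)\succeq 0$) and $L_n(\widehat{Y}_n) \not\succeq 0$; the latter is equivalent, via $\gat_j(0)=0$, to $L_n(Y_n)\not\succeq 0$.

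The next step is to obtain a uniform bound $\Vert A_j^{(n)}\Vert \leq C$ independent of $n$. Writing $L_n = M_n\circ \Gat$ for a monic linear pencil $M_n = I_\ell + \sum_{j=1}^4 B_j^{(n)} z_j$ of size $\ell$ with $M_n(\opco(\Gat(\oD_p))) \succeq 0$, Lemma~\ref{l:zeroininterior} furnishes the required bound once we know that $0$ lies in the norm interior of $\opco(\Gat(\oD_p))$. Since $p(0)\succ 0$, some norm ball $B(0,\delta)\subset \oD_p$, so the first two coordinates of $\Gat$ already cover a neighborhood of $0$ in the $x$- and $y$-directions; for the remaining two coordinates, the identity
\[
\tfrac{1}{2}\bigl[\Gat(X,Y) + \Gat(-X,-Y)\bigr] = \bigl(0,\,0,\,XY+YX,\,i(XY-YX)\bigr)
\]
shows that convex combinations of $\Gat$-images of tuples in $B(0,\delta)$ reach a neighborhood of $0$ in the $(xy+yx, i(xy-yx))$ directions as well. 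Passing to a convergent subsequence yields a monic $xy$-pencil $L = I_\ell + \sum A_j\gat_j$ of size $\ell$ with $L(\cD_p)\succeq 0$, and, by extracting a norm limit of unit eigenvectors realizing a negative eigenvalue of $L_n(Y_n)$, the value $L(Y)$ has a nonpositive eigenvalue, so $L(Y)\not\succ 0$.

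For the final assertion that $L(X)\succ 0$ for every $X \in \intr(\cD_p)$, let $M := \lim M_n$, so that $M(\opco(\Gat(\cD_p)))\succeq 0$. The interior statement above, combined with the second half of Lemma~\ref{l:zeroininterior}, yields $M\succ 0$ on $\intr(\opco(\Gat(\cD_p)))$, and a local first-order analysis at any $X \in \intr(\cD_p)$---using once more the surjection of $\Gat$ onto the first two coordinates together with the averaging identity for the last two coordinates applied to tuples $X+tV$ with $V$ ranging over a neighborhood in $\sa$---places $\Gat(X)$ in $\intr(\opco(\Gat(\cD_p)))$. The main obstacle I anticipate is precisely this pair of interior statements, $0 \in \intr(\opco(\Gat(\oD_p)))$ and $\Gat(\intr \cD_p) \subset \intr(\opco(\Gat(\cD_p)))$; these are exactly the features that distinguish $xy$-convexity from $y^2$-convexity, where, as the remark preceding this proposition indicates, the image of $\Gat$ lies in the boundary of its operator convex hull and the uniform bound on the coefficients is unavailable.
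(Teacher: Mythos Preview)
Your overall architecture---choose $Y_n\to Y$ outside $\cD_p(\ell)$, separate with size-$\ell$ monic $xy$-pencils $L_n$, extract a convergent subsequence, and argue the limit pencil $L$ does the job---matches the paper's. The divergence is in the two lemmas you flag as the ``main obstacle'': the uniform coefficient bound and the interior positivity. The paper does \emph{not} try to verify either $0\in\intr(\opco(\Gat(\oD_p)))$ or $\Gat(\intr\cD_p)\subset\intr(\opco(\Gat(\cD_p)))$; instead it proves two ad hoc lemmas specific to the $xy$ case. Lemma~\ref{l:preBMI1} bounds $\|A\|,\|B\|,\|C\pm C^*\|$ directly by evaluating any $L$ positive on $\cD_p$ at a handful of explicit tuples of size $1$ and $2$ near $0$. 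Lemma~\ref{l:preBMI2} shows that a monic $xy$-pencil nonnegative on $S$ is strictly positive on $\intr(S)$ by a bare-hands contradiction: if $L(X,Y)h=0$ with $(X,Y)$ interior, one studies the four one-variable polynomials $t\mapsto\langle L(tX,tY)h,h\rangle$, $\langle L(tX,Y)h,h\rangle$, $\langle L(X,tY)h,h\rangle$, $\langle L(tX,\tfrac1t Y)h,h\rangle$ and forces an impossibility. These arguments are elementary and make no reference to the convex hull of $\Gat(\oD_p)$.

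Your route through Lemma~\ref{l:zeroininterior} is viable in principle, but the justifications you sketch do not work as written. The averaging identity $\tfrac12[\Gat(X,Y)+\Gat(-X,-Y)]=(0,0,XY+YX,i(XY-YX))$ cannot produce $\pm\epsilon e_4=(0,0,0,\pm\epsilon I)$, since $i(XY-YX)=\epsilon I$ for bounded self-adjoint $X,Y$ is forbidden by Wintner--Wielandt; you must instead pass to a $2\times2$ block tuple (e.g.\ $X=\bigl(\begin{smallmatrix}0&a\\a^*&0\end{smallmatrix}\bigr)$, $Y=\bigl(\begin{smallmatrix}0&b\\b^*&0\end{smallmatrix}\bigr)$ with $a\bar b$ purely imaginary) and compress. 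The same defect afflicts your ``local first-order analysis'' for the inclusion $\Gat(\intr\cD_p)\subset\intr(\matco(\Gat(\cD_p)))$: at level $1$ the fourth coordinate of $\Gat$ is identically zero, so no perturbation $X+tV$ at the same level moves it; again one must lift to size $2\ell$ and compress. With those repairs your argument goes through, but the paper's two direct lemmas are shorter and self-contained.
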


\begin{lemma}
 \label{l:preBMI1}
 If $p \in M_\mu(\C\ax)$ and  $p(0)\succ 0,$ then there is a constant $\beta$
 such that if 
\[
 L(x,y) = I + Ax+By + Cxy+ C^*yx,
\]
 is a monic $xy$-pencil that is positive semidefinite
 on $\cD_p$, then 
\[
 \|A\|, \, \|B\|, \, \|C+C^*\|, \, \|C-C^*\| \le \beta.
\]
\end{lemma}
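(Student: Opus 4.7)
The plan is to use the hypothesis $p(0)\succ 0$ to produce a free neighborhood of the origin inside $\cD_p$, then probe the pencil $L$ inside this neighborhood with tuples that isolate each of the four coefficients $A$, $B$, $C+C^*$ and $C-C^*$ in turn.

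Since $p(0)\succ 0$ and the norm of $p(X,Y)-p(0)\otimes I_n$ is bounded by a polynomial in $\|X\|,\|Y\|$ with no constant term (a bound depending only on $p$, not on the matrix level $n$), there is an $\epsilon>0$, depending only on $p$, such that whenever $(X,Y)\in\mathbb{S}_n(\C)^2$ satisfies $\|X\|,\|Y\|\le\epsilon$, one has $p(X,Y)\succ 0$ and hence $(X,Y)\in\cD_p$. Consequently $L(X,Y)\succeq 0$ for all such $(X,Y)$. Specializing to $Y=0$ with $X=\pm\epsilon I_n$ immediately gives $\|A\|\le 1/\epsilon$, and symmetrically $\|B\|\le 1/\epsilon$.

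To isolate the bilinear terms I would average $L$ at $(X,Y)$ and $(-X,-Y)$: the linear parts cancel and
\[
\tfrac{1}{2}\bigl(L(X,Y)+L(-X,-Y)\bigr) \;=\; I + C\otimes XY + C^*\otimes YX \;\succeq\; 0
\]
for every $(X,Y)$ in the free $\epsilon$-ball. Plugging in two self-adjoint $2\times 2$ matrices $X,Y$ that are $\epsilon$ times the same self-adjoint unitary (once with equal signs, once with opposite signs) produces $XY=YX=\pm\epsilon^2 I_2$ and thence $\|C+C^*\|\le\epsilon^{-2}$. Plugging in $\epsilon$ times two distinct anticommuting self-adjoint Pauli-type unitaries gives $XY=-YX$, so the bilinear part collapses to $(C-C^*)\otimes XY$, which is self-adjoint because both factors are skew-adjoint. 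The resulting semidefiniteness, together with the fact that the eigenvalues of this tensor product come in $\pm$ pairs, yields $\|C-C^*\|\le\epsilon^{-2}$. Setting $\beta=\max(\epsilon^{-1},\epsilon^{-2})$ completes the argument.

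The main obstacle is entirely conceptual rather than computational: one must recognize that the averaging $L(X,Y)+L(-X,-Y)$ strips away the linear coefficients, and that the commutation structure of the probe tuples $(X,Y)$ cleanly separates the self-adjoint part $C+C^*$ from the skew part $C-C^*$. Once these two observations are in hand, the remaining bounds are immediate consequences of testing on $2\times 2$ matrices.
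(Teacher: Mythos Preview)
Your argument is correct and is essentially the same as the paper's: both find an $\epsilon$-ball in $\cD_p$, bound $\|A\|$ and $\|B\|$ by testing at scalar points, average $L(X,Y)+L(-X,-Y)$ to kill the linear terms, and then use commuting versus anticommuting $2\times 2$ self-adjoint test pairs to isolate $C+C^*$ and $C-C^*$ respectively. The only cosmetic differences are that the paper writes out explicit scalar and $2\times 2$ inputs where you speak of self-adjoint unitaries and Pauli-type matrices, and the paper gets $2/\epsilon^2$ where you get $1/\epsilon^2$, which is immaterial.
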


\begin{proof}
 Since $p(0)$ is positive, there is an $\epsilon>0$ such that
$(\pm \epsilon,0),(0,\pm \epsilon), \pm (\epsilon,-\epsilon)\in \cD_p(1)$ and
$\pm (X,Y)\in\cD_p(2),$ where 
\[
 (X,Y) = \left ( \begin{pmatrix} 0 & \epsilon \\ \epsilon & 0 \end{pmatrix}, 
  \begin{pmatrix} \epsilon & 0 \\ 0 & \epsilon \end{pmatrix} \right ),
\]
 It follows that $\|A\|,\, \|B\| \le \frac{1}{\epsilon}$.
 Likewise, 
\[
0\preceq  L(\epsilon,\epsilon)+L(-\epsilon,-\epsilon) = 2I + \epsilon^2 (C+C^*)
\]
and
\[
0\preceq L(\epsilon,-\epsilon) + L(-\epsilon,\epsilon)= 2I - \epsilon^2 (C+C^*).
\]
Thus $\|C+C^*\|\le \frac{2}{\epsilon^2}.$
Further, 
\[
0\preceq  L(X,Y) + L(-(X,Y)) = 2I + \epsilon^2 (C-C^*)\begin{pmatrix} 0 & 1\\-1 & 0 \end{pmatrix}.
\]
Hence $\|C-C^*\|\le \frac{2}{\epsilon^2}$.  Thus, there is a constant  $\beta$
 such that if $L$ is a monic $xy$-pencil
and $L$ is positive semidefinite on $\cD_p$, then the coefficients of $L$ are
 all bounded (in norm) by $\beta$.
\end{proof}

\begin{lemma}
 \label{l:preBMI2}
 Suppose $S\subset \mathbb S_N(\C)^2.$ If $L$ is a monic 
 $xy$-pencil that is positive semidefinite on $S$, then
 $L$ is positive definite on the interior of $S$.
\end{lemma}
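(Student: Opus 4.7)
The plan is to argue by contradiction. Suppose $(X,Y)$ lies in the interior of $S$ but $L(X,Y)\not\succ 0$; since $L(X,Y)\succeq 0$, there is a nonzero vector $h$ with $L(X,Y)h=0$. Writing $L(x,y)=I+Ax+By+Cxy+C^*yx$ and expanding under a two-parameter perturbation yields
\[
L(X+\epsilon X_0,\,Y+\delta Y_0)=L(X,Y)+\epsilon\,U_1(X_0)+\delta\,U_2(Y_0)+\epsilon\delta\,U_{12}(X_0,Y_0),
\]
where $U_1(X_0)=A\otimes X_0+C\otimes X_0Y+C^*\otimes YX_0$, $U_2(Y_0)$ is the analogous expression in $Y_0$, and the bilinear cross term is $U_{12}(X_0,Y_0)=C\otimes X_0Y_0+C^*\otimes Y_0X_0$. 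Because $(X,Y)$ lies in the interior of $S$, for every direction $(X_0,Y_0)\in\mathbb S_N(\C)^2$ the perturbed tuple $(X+\epsilon X_0,Y+\delta Y_0)$ remains in $S$ for all $\epsilon,\delta$ of sufficiently small modulus and arbitrary sign, so the corresponding evaluation of $L$ is positive semidefinite.

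Testing positivity against $h$ gives
\[
\epsilon\,\langle U_1(X_0)h,h\rangle+\delta\,\langle U_2(Y_0)h,h\rangle+\epsilon\delta\,\langle U_{12}(X_0,Y_0)h,h\rangle\ge 0
\]
for all small $\epsilon,\delta\in\mathbb R$ of either sign. Setting $\delta=0$ and varying the sign of $\epsilon$ forces $\langle U_1(X_0)h,h\rangle=0$, and symmetrically $\langle U_2(Y_0)h,h\rangle=0$; then independently flipping the signs of $\epsilon$ and $\delta$ forces $\langle U_{12}(X_0,Y_0)h,h\rangle=0$. Hence the real-valued polynomial $(X_0,Y_0)\mapsto\langle L(X+X_0,Y+Y_0)h,h\rangle$ on $\mathbb S_N(\C)^2$ vanishes on an open neighborhood of the origin, and since it has bidegree $(1,1)$ in $(X_0,Y_0)$ it vanishes identically. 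Specializing to $(X_0,Y_0)=(-X,-Y)$ gives $\langle L(0,0)h,h\rangle=\|h\|^2=0$, contradicting $h\neq 0$.

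The only subtle step is insisting on two independent parameters: a one-parameter argument along the ray $\epsilon=\delta$ would merely yield $\epsilon(u_1+u_2)+\epsilon^2 u_{12}\ge 0$ and hence $u_1+u_2=0$ and $u_{12}\ge 0$, which is not strong enough to annihilate the bilinear piece. Allowing $\epsilon$ and $\delta$ to vary independently in sign exploits the bilinearity of $Cxy+C^*yx$ to kill each coefficient separately. No convexity or further structural assumption on $S$ is required, only that interior points admit a full open ball of admissible directions.
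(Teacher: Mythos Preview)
Your proof is correct and shares the paper's overall strategy (assume a kernel vector $h$ at an interior point and perturb to reach a contradiction), but the execution is genuinely different and arguably cleaner. The paper perturbs along four specific one-parameter curves: first $(tX,tY)$ to show $\alpha:=\langle(C\otimes XY+C^*\otimes YX)h,h\rangle>0$, then $(tX,Y)$ and $(X,tY)$ to pin down the linear coefficients as $-\alpha$, and finally the hyperbolic path $(tX,t^{-1}Y)$, along which $t\langle L(tX,t^{-1}Y)h,h\rangle=-(t-1)^2$, to force the contradiction. Your two-parameter perturbation with \emph{arbitrary} directions $(X_0,Y_0)$ kills all three coefficients simultaneously by independent sign changes, and then specialization to $(X_0,Y_0)=(-X,-Y)$ yields $\|h\|^2=\langle L(0,0)h,h\rangle=0$ directly, bypassing the need for the clever hyperbolic curve. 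What the paper's approach buys is that it only ever uses perturbations of the very concrete form $(tX,sY)$; what your approach buys is a uniform, coordinate-free argument that makes the role of bilinearity transparent.

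One cosmetic remark: the sentence ``vanishes on an open neighborhood of the origin, and since it has bidegree $(1,1)$ it vanishes identically'' is unnecessary. The smallness was imposed on $\epsilon,\delta$, not on $(X_0,Y_0)$, so you have already established $\langle U_1(X_0)h,h\rangle=\langle U_2(Y_0)h,h\rangle=\langle U_{12}(X_0,Y_0)h,h\rangle=0$ for \emph{every} $(X_0,Y_0)\in\mathbb S_N(\C)^2$; the specialization to $(-X,-Y)$ is then immediate without any appeal to polynomial identity.
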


\begin{proof} 
 Suppose $L$ is a monic $xy$-pencil,
\[
 L(x,y) = I + Ax+By + Cxy+ C^*yx,
\]
 that is positive semidefinite on $S.$
 Arguing by contradiction, suppose  $(X,Y)$ is in the interior of $S,$
 but $L(X,Y)\not \succ 0$. Hence there is a vector $h$ such that $\|h\|=1$ and
  $L(X,Y)h=0.$  Set 
\[
\begin{split}
 q_1(t)= & \langle L(tX,tY)h,h\rangle
\\  = & 1  + t\langle [A\otimes X+B\otimes Y]h,h\rangle
  + t^2  \langle [ C\otimes XY + C^*\otimes YX]h,h\rangle.
\end{split}
\]
 Thus $q_1$ is quadratic, $q_1(0)=1,$ $q_1(1)=0$, and $q_1(t)\ge 0$ for $t$ near $1$. Hence
 the coefficient of $t^2$ is positive; that is
\[
 \alpha:=  \langle [C\otimes XY + C^*\otimes YX]h,h\rangle > 0.
\]

Let 
\[
\begin{split}
q_2(t) = & \langle L(tX,Y)h,h\rangle 
\\  = & \langle L(X,Y)h,h\rangle  + 
     (t-1) [\langle A\otimes X \, h,h\rangle  \, + \, \alpha]
\\ = &  (t-1) [\langle A\otimes X \, h,h\rangle  \, + \, \alpha].
\end{split}
\]
Since $q_2(t)\ge 0$ for $t$ real and near $1$, 
\begin{equation}
 \label{e:xyA}
 \langle A\otimes X \, h,h\rangle  \, + \, \alpha = 0
\end{equation}
 A similar argument shows,
\begin{equation}
 \label{e:xyB}
  \langle B\otimes Y \, h,h\rangle \, +\, \alpha =0.
\end{equation}
Combining equations \eqref{e:xyA} and \eqref{e:xyB} gives,
\[
 - \langle A\otimes X h,h\rangle = - \langle B\otimes Y h,h\rangle 
 = \alpha>0.
\] 
 Since $\langle L(X,Y)h,h\rangle =0$, it follows that $\alpha=1$.
 Hence, 
\[
   q_3(t) := t \, \langle L(tX,\frac{1}{t}Y)h,h\rangle  
  = 2t   -t^2 -1 = -(t-1)^2. 
\]
 On the other hand, $(tX,\frac{1}{t}Y)\in S$ for $t$ real and near $1$
 and hence $q_3(t)\ge 0$ for such $t$ and we have reached a contradiction.
 Thus, $L(X,Y)\succ 0$.
\end{proof}

\begin{proof}[Proof of Proposition~\ref{p:BMI}]
 Suppose $Y$ is in the boundary of $\cD_p(\ell)$. Thus, there is a sequence 
 $(Y_n)$ 
  converging to $Y$ with each $Y_n\not\in \cD_p(\ell)$. 
 By Corollary \ref{cor: starseparation}, there exists monic $xy$-pencils
 $L_n$ of size $\ell$  
 such that $L_n$ is positive semidefinite on  $\cD_p$ 
 and $L_n(Y_n)\not \succeq 0$.  Since the coefficients of $L_n$ all have size $\ell$
 and are, by Lemma \ref{l:preBMI1}, uniformly bounded, by passing to a subsequence if necessary, 
 we may assume $L_n$ converges (coefficient-wise) to a  monic $xy$-pencil $L$
 of size $\ell.$
 Thus $L$ is positive semidefinite on $\cD_p$ and $L_n(Y_n)$ converges to $L(Y)$.
Hence $L(Y)\not \succ 0$  and, since $L$ is monic and $L(\cD_p)\succeq 0$,
 Lemma \ref{l:preBMI2} implies $L\succ 0$ on the interior of $\cD_p$
 and the proof is complete. 
\end{proof}

\section{$y^2$-Convex Sets}%
\label{sec:y2convex}
This section
treats $y^2$-convex sets, where it is shown that a  free set 
$\KK \subset \gtup\times\htup$ is $y^2$-convex if and only if, for
each $n$  and $Y\in \htupn$, the \df{slice} $\{X: (X,Y)\in \KK(n)\}$ is convex
in the ordinary sense. It is also shown that the
$y^2$-convex sets $\TV^d$ of equation \eqref{e:TV} are the positivity set
of a single (finite) $y^2$-pencil. 
By comparison, as
noted in Example \ref{eg:TVd}, the general theory only guarantees that 
$\TV^d=\cD_p$ is the intersection of, possibly infinitely many, 
positivity sets of (finite) monic $y^2$-pencils.

Suppose  $\cS\subset \gtup\times\htup$  and write elements $Z$ of $\cS(n)$ as $Z=(X,Y)$ with 
$X\in \gtupn$ and $Y\in \htupn$. 
In the case $\cS$ is free, it is called \df{convex in $x$} if, for each $n$ and $Y\in\htupn$, 
the slice
\[
 \cS[Y]:=\{X\in \gtupn: (X,Y)\in \cS(n)\} \subset \gtupn,
\]
is convex (in the usual sense as a subset of $\gtupn$). In this setting  $y^2$-convex
means $\Gat$-convex for
\[
 \Gat=\{x_1,\dots,x_\gv,y_1,\dots,y_\hv,y_1^2,\dots,y_\hv^2\}.
\]

\begin{proposition}
\label{prop:y2convex=convexinx}
A free set  $\cS\subset \gtup\times \htup$ is $y^2$-convex if and only if  it is convex in $x$.
\end{proposition}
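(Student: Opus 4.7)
My plan rests on the following identification of $\Gat$-pairs in this setting. For an isometry $V:\C^m\to\C^n$ and $Y_k\in\bbS_n(\C)$, a direct calculation gives
\[
V^*Y_k^2V-(V^*Y_kV)^2 \;=\; V^*Y_k(I-VV^*)Y_kV \;=\; \bigl((I-VV^*)Y_kV\bigr)^*(I-VV^*)Y_kV,
\]
so $V^*Y_k^2V=(V^*Y_kV)^2$ if and only if the range of $V$ is invariant (equivalently reducing) for $Y_k$. Hence $((X,Y),V)\in\Coup$ exactly when $\range(V)$ reduces every $Y_k$, and the first $\gv+\hv$ coordinates of $\Gat$ contribute no condition.

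For the forward direction ($y^2$-convex $\Rightarrow$ convex in $x$), I would take $X_1,X_2\in\cS[Y]$ and $t\in[0,1]$. Since $\cS$ is a free set, $(X_1\oplus X_2,Y\oplus Y)\in\cS(2n)$. I would then use the isometry
\[
V=\begin{pmatrix}\sqrt{t}\,I_n\\ \sqrt{1-t}\,I_n\end{pmatrix}:\C^n\to\C^{2n},
\]
and check that $V^*(Y\oplus Y)V=Y=(V^*(Y\oplus Y)V)$ and $V^*(Y\oplus Y)^2V=Y^2$, so the range of $V$ reduces $Y\oplus Y$ and $((X_1\oplus X_2,Y\oplus Y),V)\in\Coup$. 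Applying $y^2$-convexity yields
\[
V^*(X_1\oplus X_2,Y\oplus Y)V=(tX_1+(1-t)X_2,\,Y)\in\cS,
\]
so $\cS[Y]$ is convex.

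For the converse, suppose $\cS$ is convex in $x$, $(X,Y)\in\cS(n)$, and $((X,Y),V)\in\Coup$. Let $P=VV^*$ be the projection onto $\range(V)$; by the observation above, $P$ commutes with each $Y_k$. After a unitary change of coordinates (permitted since $\cS$ is free) I may assume $P=I\oplus 0$, so that $Y$ is block diagonal $Y=Y'\oplus Y''$ and $X=\bigl(\begin{smallmatrix}X' & B\\ B^* & X''\end{smallmatrix}\bigr)$, with $V^*(X,Y)V=(X',Y')$. The unitary $U=I\oplus(-I)$ commutes with $Y$, so $(U^*XU,Y)=\bigl(\bigl(\begin{smallmatrix}X' & -B\\ -B^* & X''\end{smallmatrix}\bigr),Y\bigr)\in\cS$ by unitary invariance. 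Because the slice $\cS[Y]$ is convex,
\[
\tfrac12\bigl(X+U^*XU\bigr)\;=\;X'\oplus X''\;\in\;\cS[Y],
\]
so $(X'\oplus X'',\,Y'\oplus Y'')\in\cS$. Restricting to the reducing subspace corresponding to the first block (using that $\cS$ is a free set) gives $(X',Y')=V^*(X,Y)V\in\cS$, proving $y^2$-convexity.

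The only mildly delicate step is identifying the $\Gat$-pair condition with invariance of $\range(V)$ under $Y$; everything after that is a clean combination of unitary conjugation, convex combinations on a fixed slice, and restriction to reducing subspaces.
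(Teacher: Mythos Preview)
Your proof is correct and follows essentially the same approach as the paper: the same isometry $V=\begin{pmatrix}\sqrt{t}\,I_n\\ \sqrt{1-t}\,I_n\end{pmatrix}$ for the forward direction, and the same averaging trick with $U=I\oplus(-I)$ followed by restriction to a reducing subspace for the converse. Your explicit computation identifying the $\Gat$-pair condition with invariance of $\range(V)$ under $Y$ is a nice addition that the paper only states without proof.
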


\begin{proof}
 Suppose $\cS$ is a $y^2$-convex free set.
 Fix $n$ and $E\in \htupn$. Given $A,B\in \gtupn$ such that $(A,E),(B,E)\in \cS(n)$,
 observe that, since $\cS$ is a free set
\[
  (X,Y) := (A,E)\oplus (B,E) = \big ( \begin{pmatrix} A & 0\\0& B \end{pmatrix},
  \, \begin{pmatrix} E & 0\\0& E\end{pmatrix}  \big ) \in \cS(2n).
\]
 Since $\cS$ is $y^2$-convex and, for $0\le t \le 1$,
 $V=\begin{pmatrix} \sqrt{t} I_n & \sqrt{1-t} I_n\end{pmatrix}^*$
is an isometry satisfying $V^* Y^2V= (V^*YV)^2$ (so that $((X,Y),V)$ is a $y^2$-pair),
\[
 V^* (X,Y)V = (tA+(1-t) B,E)  \in \cS(n).
\]
 Thus $\cS$ is convex in $x$.

 Now suppose $\cS$ is a free set that is convex in $x$. To prove $\cS$ is $y^2$-convex,
 suppose $(X,Y)\in\cS(n+m)$ and $V:\C^n\to \C^{n+m}$ is an isometry such that
 $((X,Y),V)$ is a $y^2$-pair. Explicitly $V^* Y^2 V= (V^*YV)^2$ and thus the 
 range of $V$ reduces $Y$. Hence, with respect to the direct sum $\C^n\oplus \C^m$,
\[
 (X,Y) = \left ( \begin{pmatrix} X_{11} & X_{12} \\ X_{12}^* & X_{22} \end{pmatrix},
 \,  \begin{pmatrix} Y_{11} & 0 \\ 0 & Y_{22} \end{pmatrix} \right ).
\]
 Letting $U$ denote the unitary matrix,
\[ 
 U =\begin{pmatrix} I_n & 0\\ 0 & - I_m \end{pmatrix},
\]
 $U^*(X,Y)U\in \cS(n+m)$ since free sets are closed under unitary similarity. Since
 $\cS$ is convex in $x$, the slice $\cS[Y]$ is convex and thus
\[
 (X^\prime,Y^\prime):= \frac 12 [ U^*(X,Y)U+(X,Y)] = 
 \left ( \begin{pmatrix} X_{11} & 0 \\0 & X_{22}\end{pmatrix}, \,
 \begin{pmatrix} Y_{11} & 0 \\ 0 & Y_{22} \end{pmatrix} \right )  \in \cS(n+m).
\]
 Finally, since $\C^n\oplus \{0\}$ reduces $(X^\prime,Y^\prime)$ and free sets
are closed with respect to restrictions to reducing subspaces, $(X_{11},Y_{11})\in \cS(n)$
and hence $\cS$ is $y^2$-convex. 
\end{proof}

 A fundamental question is:
if $\KK$ is free semialgebraic and $\Gat$-convex, then is $\KK$ 
the positivity set of (a) a  $\Gat$-concomitant or, more restrictively,
(b) a $\Gat$-pencil?
Since, for positive integers $d$,
the symmetric polynomial $x^2+y^{2d}$ is $y^2$-convex, $p=1-x^2-y^{2d} $ is
$y^2$-concave and hence, by Proposition \ref{p:concave-convex}, 
 $\wcD_p=\TV^d$ is $y^2$-convex.

\begin{proposition}
\label{p:TVd}
   For $d\in\mathbb N$ and  $p_d=1-x^2-y^{2d},$   there is a monic $y^2$-pencil $L_d$ of size $d+1$ such that
 $\wcD_{L_d} =\wcD_{p_d}$.
\end{proposition}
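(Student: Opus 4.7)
The strategy is to exhibit one concrete monic $y^2$-pencil $L_d$ of size $d+1$ and verify by Schur complement bookkeeping that its positivity domain coincides, in the noncommutative sense, with that of $p_d = 1 - x^2 - y^{2d}$. The low-dimensional cases suggest the pattern. For $d=1$ take
\[
L_1(x,y) \;=\; \begin{pmatrix} 1 - y^2 & x \\ x & 1 \end{pmatrix};
\]
its Schur complement in the $(2,2)$ slot is $1-x^2-y^2 = p_1$, so $\wcD_{L_1} = \wcD_{p_1}$ directly. For $d = 2$ the size-three pencil
\[
L_2(x,y) \;=\; \begin{pmatrix} 1 & y^2 & x \\ y^2 & 1 & 0 \\ x & 0 & 1 \end{pmatrix}
\]
works: one Schur complement in the $(3,3)$ slot yields $\left(\begin{smallmatrix} 1 - X^2 & Y^2 \\ Y^2 & 1 \end{smallmatrix}\right)$, and a further Schur complement in the new $(2,2)$ slot delivers $I - X^2 - Y^4 = p_2(X,Y)$. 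The scalar Schur identities transplant verbatim to the operator setting because $Y$ commutes with itself, so every intermediate occurrence of a polynomial in $Y$ squares cleanly.

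For general $d$ I would define $L_d$ as a $(d+1)\times(d+1)$ monic $y^2$-pencil of the form
\[
L_d(x,y) \;=\; \begin{pmatrix} T_d(y) & x\, e_1 \\ x\, e_1^{*} & 1 \end{pmatrix},
\]
where $T_d(y)$ is a $d\times d$ monic $y^2$-pencil designed so that iterated Schur complementation down a chain of $y^2$ off-diagonals produces exactly $(Y^2)^d = Y^{2d}$ in the $(1,1)$ slot of $T_d(Y)^{-1}$. With this choice, Schur complementation of $L_d(X,Y)$ in its bottom-right block reduces $L_d \succeq 0$ to $x^2 (T_d(Y)^{-1})_{1,1} \le 1$, which after clearing is precisely $X^2 + Y^{2d} \preceq I$.

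Granting the existence of $T_d$, the forward containment $\wcD_{L_d} \subseteq \wcD_{p_d}$ is then proved by running the Schur chain forward: eliminate the $x$-block first, then peel off the $y^2$-chain one block at a time, using the self-commutativity of $Y$ to combine the successive $Y^2$ factors into the single monomial $Y^{2d}$. The reverse containment $\wcD_{p_d} \subseteq \wcD_{L_d}$ I would prove by building an explicit positive-semidefinite factorization of $L_d(X,Y)$ from the data $I - X^2 - Y^{2d} \succeq 0$. The construction lives inside the commutative $C^{*}$-algebra generated by $Y$, so operator square roots such as $(I - Y^2)^{1/2}$ and $(I - Y^{2d})^{1/2}$ are available via the continuous functional calculus; the variable $X$ is accommodated via the Schur complement identity from the forward direction read in reverse.

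The principal technical obstacle is pinning down the correct $T_d$: a naive tridiagonal chain of $y^2$-entries produces Chebyshev-type Schur complements (rational expressions such as $(1-2y^4)/(1-y^4)$) rather than the pure monomial $y^{2d}$, so the coupling pattern inside $T_d$ and the placement of the $x$-entry in $L_d$ must be selected so that the successive Schur reductions actually telescope. Once the right $L_d$ is exhibited, both containments reduce to routine Schur complement manipulations supported by the self-commutativity of $Y$.
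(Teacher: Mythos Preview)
Your proposal identifies the right framework (Schur complements linking a $y$-block to an $x$-entry) and correctly handles $d=1,2$, but it has a genuine gap at the only nontrivial step: you never construct $T_d$ for general $d$. You yourself flag this --- ``Granting the existence of $T_d$'' and ``Once the right $L_d$ is exhibited'' --- and you correctly observe that the naive tridiagonal chain fails because its Schur complements produce Chebyshev-type rational expressions rather than the clean monomial $y^{2d}$. But identifying the obstacle is not the same as overcoming it, and the entire content of the proposition is precisely that such an $L_d$ exists.

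The paper resolves this by writing down explicit coefficients. It sets $\alpha_k=\sqrt{(d-1-k)/(d-1)}$, $c_k=\alpha_k/\alpha_{k-1}$, and $q=\sqrt{d-1}(y^2-1)$, builds a $d\times(d-1)$ matrix $W$ whose last row carries the entries $\alpha_j y^j q$, and uses the telescoping identity $\alpha_k^2-2\alpha_{k+1}^2+\alpha_{k+2}^2=0$ to force $q^2\sum_{j=0}^{d-2}\alpha_j^2 y^{2j}=(d-1)-dy^2+y^{2d}$. Then $M=WW^*+(1-y^{2d})E_{dd}$ is a $y^2$-pencil, and bordering with $x$ in the corner gives $L_d$. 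The point is that the $y^{2d}$ produced by $WW^*$ is exactly cancelled by the correction, so $M$ has only degree-two terms in $y$; the positivity equivalence then follows from the $WW^*\succeq 0$ structure. This explicit algebra is what your outline is missing.

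A secondary issue: your general-$d$ paragraph conflates the two Schur complements of $L_d$. Taking the complement in the scalar $(d{+}1,d{+}1)$ block yields $T_d(Y)-X^2E_{11}$, not $1-X(T_d(Y)^{-1})_{11}X$; the latter is the complement with respect to $T_d$ and, since $X$ and $Y$ need not commute, the condition $1-X(T_d(Y)^{-1})_{11}X\succeq 0$ is not equivalent to $1-X^2-Y^{2d}\succeq 0$ for any choice of $(T_d^{-1})_{11}$. Your $d=1,2$ examples use the former route (eliminate the scalar block first, then peel the $y$-chain), which is the only one that survives noncommutativity; your general description should match that.
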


\begin{proof}
 The pencils $L_1$ and $L_2$ are trivial to construct. For $d=3,4$ one can take
\[
\begin{split}
 L_3(x,y) & =  \begin{pmatrix} 1 & 0 & 0& x \\ 0& 1 & y & y^2 \\ 0 & y & 1+y^2 & \frac{y}{2} \\
   x & y^2 & \frac{y}{2} & 1+\frac{y^2}{4} \end{pmatrix}, \\
 L_4(x,y) & =  \left(
\begin{array}{ccccc}
 1 & 0 & 0 & 0 & x \\
 0 & 1 & y & 0 & y^2 \\
 0 & y & y^2+1 & y & 0 \\
 0 & 0 & y & y^2+1 & \frac{1}{8} \left(-4 y^2-5\right) \\
 x & y^2 & 0 & \frac{1}{8} \left(-4 y^2-5\right) & \frac{5 y^2}{8}+\frac{89}{64} \\
\end{array}
\right).
\end{split}
\]
That $L_4$ is not monic is
easily remedied since its constant term is positive definite.

   A recipe for constructing such
pencils is the following. 
Fix $d$. For $0\le k\le d-2,$ 
set 
\[
\alpha_k=\sqrt{\frac{d-1-k}{d-1}}, \quad
c_0=1 \text{  and  }
c_k=\frac{\alpha_k}{\alpha_{k-1}}
\text{ if }k>0.
\]
Then $ \prod_{j=1}^k c_j = \alpha_k$.
Let $q=\sqrt{d-1}(y^2-1)$ and let
\[
  W = \begin{pmatrix} 1 & 0& 0&0 &\dots & 0\\
                     -c_1y& 1&0&0& \dots & 0 \\
                     0&-c_2 y&1&0 &\dots &0\\
                   \vdots &\ddots& \ddots & \ddots& \ddots & \vdots \\
        \vdots &\ddots& \ddots & \ddots& \ddots & \vdots \\
                     0 & 0 &0&\cdots & -c_{d-2} y & 1  \\
                  \alpha_0 q   & \alpha_1 y q & \alpha_2 y^2 q & \dots &\dots & \alpha_{d-2} y^{d-2} q
\end{pmatrix} \in M_{d\times (d-1)}(\C).
\]
Then
\[
 WW^* =\begin{pmatrix} 1 & -c_1y & 0 & 0 &0 &\dots  & q \\
            -c_1y & 1+c_1^2y^2 & -c_2 y & 0 &0 & \dots & 0 \\
            0 & -c_2y & 1 +c_2^2 y^2 & -c_3 y &0 &\dots &0 \\
 \vdots &\ddots& \ddots & \ddots & \ddots& \ddots & \vdots \\
 0 &0 & \ddots & \ddots  & 1+c_{d-3}^2 y^2 & -c_{d-2}y & 0 \\
  0&0&0&\ddots&\cdots & 1+c_{d-2}^2 y^2 & 0 \\
  q&0&0&0&\cdots&0 & q^2( \sum_{j=0}^{d-2} \alpha_j^2 y^{2j}) \end{pmatrix}\in M_d(\C).            
\]
 Next observe, for $0\le k \le d-4,$
\[
\begin{split}
 0 &=  \alpha_k^2  - 2\alpha_{k+1}^2 + \alpha_{k+2}^2 \\
 0 &= \alpha_{d-3}^2 -2 \alpha_{d-2}^2.
\end{split}
\]
Hence
\[
 q^2( \sum_{j=0}^{d-2} \alpha_j^2 y^{2j}) = (d-1) -d y^2 + y^{2d}.
\]
Let 
\[
 M = WW^* + \begin{pmatrix} 0_{d-1,d-1} & 0_{1,d-1}\\ 0_{d-1,1} & 1-y^{2d} \end{pmatrix}.
\]
 Finally, set
\[
 L_d =\begin{pmatrix} 1 & \begin{pmatrix} 0&0&\dots & x \end{pmatrix} \\
           \begin{pmatrix} 0\\0\\ \vdots \\ x \end{pmatrix} &  M \end{pmatrix}.
\]
Now $L_d$ is not monic 
but its constant term is positive definite,
 so a simple scaling produces an equivalent monic linear
 pencil.
\end{proof}

\end{document}